\theoremstyle{plain}
\newtheorem{thm}{Theorem}[section]
\newtheorem{lemma}[thm]{Lemma}
\newtheorem{prop}[thm]{Proposition}
\newtheorem{cor}[thm]{Corollary}
\theoremstyle{definition}
\newtheorem{defn}[thm]{Definition}
\theoremstyle{remark}
\newtheorem{remark}[thm]{Remark}
\newtheorem*{thank}{{\bf Acknowledgments}}
\newcommand{\nc}{\newcommand}
\def\makeop#1{\expandafter\def\csname#1\endcsname
  {\mathop{\rm #1}\nolimits}\ignorespaces}
\def\makebb#1{\expandafter\def
  \csname bb#1\endcsname{{\mathbb{#1}}}\ignorespaces}
\def\makebf#1{\expandafter\def\csname bf#1\endcsname{{\bf
      #1}}\ignorespaces} 
\def\makegr#1{\expandafter\def
  \csname gr#1\endcsname{{\mathfrak{#1}}}\ignorespaces}
\def\makescr#1{\expandafter\def
  \csname scr#1\endcsname{{\EuScript{#1}}}\ignorespaces}
\def\makecal#1{\expandafter\def\csname cal#1\endcsname{{\mathcal
      #1}}\ignorespaces} 
\def\doLetters#1{#1A #1B #1C #1D #1E #1F #1G #1H #1I #1J #1K #1L #1M
                 #1N #1O #1P #1Q #1R #1S #1T #1U #1V #1W #1X #1Y #1Z}
\def\doletters#1{#1a #1b #1c #1d #1e #1f #1g #1h #1i #1j #1k #1l #1m
                 #1n #1o #1p #1q #1r #1s #1t #1u #1v #1w #1x #1y #1z}
     \def\qed{\qedmark\medbreak}%
\def\qedmark{{\enspace\vrule height 6pt width 5pt depth 1.5pt}}%
    \def\setminus{\smallsetminus}
\def\Fpbar{\overline{\bbF}_p}
\def\Fp{{\bbF}_p}
\def\Qp{{\bbQ}_p}
\def\Zp{{\bbZ}_p}
\def\Qbar{\overline{\bbQ}}
\newcommand{\Z}{\mathbb Z}
\newcommand{\Q}{\mathbb Q}
\newcommand{\C}{\mathbb C}
\newcommand{\F}{\mathbb F}
\newcommand{\npr}{\noindent }
\newcommand{\<}{\langle}   
\renewcommand{\>}{\rangle} 
\nc{\embed}{\hookrightarrow}
\newcommand{\ch}{characteristic }
\newcommand{\ac}{algebraically closed }
\newcommand{\dieu}{Dieudonn\'{e} }
\nc{\ol}{\overline}
\nc{\wt}{\widetilde}
\nc{\opp}{\mathrm{opp}}
\def\ci{{\rm ci}}
\def\vol{{\rm vol}}
\def\char{{\rm char}}
\newcommand{\DM}{\mathcal{DM}}
\begin{document}
\renewcommand{\thefootnote}{\fnsymbol{footnote}}
\setcounter{footnote}{-1}
\numberwithin{equation}{section}


\title[Finiteness of endomorphism rings]
{On finiteness of endomorphism rings of abelian varieties}
\author{Chia-Fu Yu}
\address{
Institute of Mathematics, Academia Sinica and NCTS (Taipei Office)\\
6th Floor, Astronomy Mathematics Building \\
No. 1, Roosevelt Rd. Sec. 4 \\ 
Taipei, Taiwan }
\email{chiafu@math.sinica.edu.tw}


\begin{abstract}
  The endomorphism ring $\End(A)$ of an abelian variety $A$ is an order in a
  semi-simple algebra over $\Q$. The co-index of $\End(A)$ is the index
  to a maximal order containing it. We show that for abelian varieties
  of fixed dimension over any field 
  of \ch $p>0$, the $p$-exponents of the co-indices of their
  endomorphism rings are bounded. We also give a few applications to
  this finiteness result.   
\end{abstract} 

\maketitle


\def\ci{{\rm ci}} 

\section{Introduction}
\label{sec:01}
Endomorphism algebras of abelian varieties are important
objects for studying abelian varieties. For example, a
theorem of Grothendieck tells us that any
isogeny class of abelian varieties over a field of \ch $p>0$ that has
sufficient many complex multiplications is defined over a finite
field. See Oort \cite{oort:cm} and \cite{yu:cm} for more details.  
Endomorphism algebras have been studied extensively in the
literature; see Oort \cite{oort:endo} for many detailed and
interesting discussions and quite complete references therein. 
Thanks to Tate \cite{tate:eav},
Zarhin \cite{zarhin:end},  
Faltings \cite{faltings:end}, and de Jong \cite{dejong:homo}, 
we have now a fundamental approach using Tate modules (and its
analogue at $p$) to study these endomorphism algebras. 
However, not much is known for their endomorphism rings 
except for the one-dimensional case (see
Theorem~\ref{deuring}). 
In \cite{waterhouse:thesis} Waterhouse determined all possible
endomorphism rings for ordinary elementary abelian varieties over
a finite field (see \cite{waterhouse:thesis}, Theorem 7.4 for more
details).  
 

Let $A_0$ be an abelian variety over a field $k$. Denote by
$[A_0]_k$ the isogeny class of $A_0$ over $k$. It is well-known that the
endomorphism ring $\End(A_0)$ is an order of the semi-simple $\Q$-algebra
$\End^0(A_0):=\End(A_0)\otimes \Q$. A general question is what we can say
about the endomorphism rings $\End(A)$ of abelian varieties $A$ in the
isogeny class $[A_0]_k$. In the paper we consider the basic question:
how many isomorphism classes of the endomorphism rings $\End(A)$ of
abelian varieties $A$ in a fixed isogeny class $[A_0]_k$?

We define a natural numerical invariant for orders in a semi-simple
algebra which measures how far it is from a maximal order. 
Let $B$ be a finite-dimensional semi-simple algebra over $\Q$, and $O$
an order of 
$B$. Define the {\it co-index of $O$}, which we denote ${\rm ci}(O)$, 
to be the index $[R:O]$, where $R$ is a maximal order of $B$ 
containing $O$. The invariant $\ci(O)$ is independent of the choice of
$R$ (see Lemma~\ref{21}). 
For any prime $\ell$, let $v_\ell$ be the discrete valuation on 
$\Q$ at the prime $\ell$ normalized so that $v_\ell(\ell)=1$. The main
results of this paper are: 

\begin{thm}\label{11} 
  Let $g\ge 1$ be an integer. There is an positive
  integer $N$ only depending on $g$ such that $v_p({\rm
  ci}(\End(A)))<N$ for any $g$-dimensional abelian variety over any
  field of \ch $p>0$.
\end{thm}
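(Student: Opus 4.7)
The plan is to pass, via Dieudonn\'e theory, from abelian varieties to $p$-divisible groups, then to use the Dieudonn\'e--Manin classification together with Oort's minimal $p$-divisible groups to reduce the problem to a bounded-lattice estimate inside a semisimple $\Qp$-algebra.

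First I would reduce to a bound for $p$-divisible groups. By the theorems of Tate, Zarhin, Faltings, and de Jong recalled in the introduction, the natural map $\End(A)\otimes_{\Z}\Zp\hookrightarrow\End(A[p^\infty])$ is an injection of $\Zp$-orders with the same rationalisation, so a routine comparison of maximal orders (in the spirit of Lemma~\ref{21}) shows that a uniform bound $v_p(\ci(\End(X)))<N$ for $p$-divisible groups $X$ of height $h=2g$ over arbitrary fields of characteristic $p$ would imply the theorem. Since replacing $k$ by $\bar{k}$ only enlarges $\End(X)$ and $\End^0(X)$, an elementary intersection argument with maximal orders lets me further assume that $k$ is algebraically closed.

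By Dieudonn\'e--Manin, the isogeny class of $X$ over an algebraically closed field is determined by its Newton polygon $\xi$, and only finitely many Newton polygons of height $2g$ occur. For each $\xi$, Oort's theorem furnishes a minimal $p$-divisible group $H(\xi)$ over $\Fpbar$ whose endomorphism ring is the maximal order in the semisimple $\Qp$-algebra $D(\xi):=\End^0(H(\xi))$, whose $\Qp$-dimension is at most $h^2=4g^2$. The remaining task is thus to bound the $p$-adic co-index $[R:\End(X)]$, where $R=\End(H(\xi))$ is the maximal order of $D(\xi)=\End^0(X)$, uniformly over all $X$ in this isogeny class.

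To control this index I would identify $\End(X)$ with the stabiliser of the covariant Dieudonn\'e lattice $M:=\D(X)$ in $D(\xi)$, viewed inside the endomorphism algebra of the rational isocrystal $N:=M[1/p]$; thus $\End(X)=\{d\in D(\xi):d\cdot M\subset M\}$. It suffices to produce $m=m(g)$ with $p^m(R\cdot M)\subset M$, for then $p^mR\subset\End(X)$ and $v_p(\ci(\End(X)))\le m\cdot\dim_{\Qp}D(\xi)\le 4mg^2$, so we may take $N:=4mg^2+1$. The main obstacle is that the Rapoport--Zink space parametrising Dieudonn\'e lattices in $N$ with the prescribed Newton polygon is not quasi-compact, so the uniform bound $m=m(g)$ is not automatic. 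I would overcome this by combining Oort's structural results on completely slope-divisible filtrations (which refine $M$ by isoclinic subquotients of heights bounded by $2g$) with a direct computation in the isoclinic case: if $X$ is isoclinic of slope $\lambda=r/s$ and height $ns$, then $R=M_n(\O_{D_\lambda})$ acts on a free $\O_{D_\lambda}$-module of rank $n$, and $(R\cdot M)/M$ is killed by $p^{m_0}$ for some $m_0$ depending only on $n$ and $s$. Re-assembling these isoclinic estimates along the filtration then yields the required uniform $N=N(g)$.
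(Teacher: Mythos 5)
Your overall architecture --- reduce to $p$-divisible groups over an algebraically closed field, identify $\End(X)$ with the stabiliser of the Dieudonn\'e lattice $M$ inside $\End^0(X)$, and bound the distance from $M$ to a lattice whose endomorphism ring is a maximal order --- is the same as the paper's. But two steps are asserted rather than established, and the second one is where the entire content of the theorem lies.

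First, the reduction from $\End(A)$ to $\End(A[p^\infty])$ is not a ``routine comparison of maximal orders,'' because the two co-indices are computed in different algebras: the map $\End(A)\otimes\Qp\to\End^0(A[p^\infty])$ is injective but in general far from surjective (an ordinary elliptic curve over $\Fpbar$ with $\End(A)=\Z$ has $\End(A[p^\infty])=\Zp\times\Zp$), so your claim that the two orders have the same rationalisation is false, and there is no formal inequality between $\ci(\End(A)\otimes\Zp)$ and $\ci(\End(A[p^\infty]))$ in either direction. The paper's Lemma~\ref{24} bridges this by realizing a maximal order of $\End(A)\otimes\Qp$ as $\End(A')\otimes\Zp$ for a $p$-power isogeny $A\to A'$ and then comparing $\End(A[p^\infty])\subset\End(A'[p^\infty])$. (Relatedly, you should base-change to $\bar k$ at the level of abelian varieties before passing to $p$-divisible groups: for $X$ over a non-closed field $\End^0_k(X)$ need not be semisimple, so $\ci(\End_k(X))$ need not even be defined; see Subsection~\ref{sec:56}.)

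Second, the uniform bound $p^{m}(R\cdot M)\subset M$ with $m=m(g)$ is exactly the hard point, and your proposal defers it to ``a direct computation in the isoclinic case.'' It is not a formal consequence of $R\simeq M_n(\mathcal{O}_{D_\lambda})$ acting on a rank-$n$ module: a priori $M$ is only a $W$-lattice stable under $F$ and $V$, not an $R$-lattice, and the set of such lattices in a fixed isoclinic isocrystal is infinite; showing that every such $M$ lies within uniformly bounded index of an $R$-stable (equivalently, minimal) lattice is precisely Manin's finiteness theorem (Theorems III.3.4--III.3.5 of his thesis, quoted as Theorem~\ref{45} in the paper), proved via the skeleton $\{m: F^n m=p^b m\}$ and the operator $\Pi_0=F^yV^x$. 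You must either cite this or supply the computation. The re-assembly across slopes also needs care: over $\bar k$ the isocrystal splits, but $M$ need not equal $\bigoplus_\lambda (M\cap N_\lambda)$, and that index must be bounded as well. The paper sidesteps both issues at once by constructing the minimal overmodule $M^{\rm min}$ (which does split) with $\length(M^{\rm min}/M)$ bounded by Manin, and checking that every endomorphism of $M$ extends to $M^{\rm min}$ (Proposition~\ref{47}), whence $p^{N_2}\End(M^{\rm min})\subset\End(M)\subset\End(M^{\rm min})$ with $\End(M^{\rm min})$ maximal.
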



\begin{thm}\label{12}
  Let $g\ge 1$ be an integer. There are only finitely
  many isomorphism classes of rings $\End(A)\otimes \Z_p$ 
  for all $g$-dimensional abelian varieties $A$ over any field of \ch
  $p>0$.  
\end{thm}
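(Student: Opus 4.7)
The plan is to derive Theorem~\ref{12} from Theorem~\ref{11} in two stages: first bound the isomorphism classes of the ambient $\Qp$-algebra $\End^0(A)\otimes_\Q\Qp$ that can arise, then count $\Zp$-orders of bounded co-index inside a fixed such algebra. The first stage rests on the observation that $\End^0(A)$ is a semisimple $\Q$-algebra of $\Q$-dimension at most $(2g)^2$, since for any prime $\ell\ne p$ the natural map $\End^0(A)\otimes\Q_\ell\to\End_{\Q_\ell}(V_\ell A)$ into the endomorphism ring of the $\ell$-adic Tate module is injective. Consequently $B:=\End^0(A)\otimes_\Q\Qp$ is a semisimple $\Qp$-algebra of uniformly bounded $\Qp$-dimension. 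Writing $B=\prod_i M_{n_i}(D_i)$ with $D_i$ central division over a local field $F_i/\Qp$, the dimension bound forces uniform bounds on $[F_i:\Qp]$, on $n_i$, and on $[D_i:F_i]$. Krasner's lemma yields only finitely many finite extensions of $\Qp$ of bounded degree up to isomorphism, and over each such local field there are only finitely many central division algebras of bounded degree (classified by Hasse invariants). Hence only finitely many isomorphism classes $B_1,\dots,B_m$ of $\Qp$-algebras can occur as $\End^0(A)\otimes\Qp$ for $g$-dimensional $A$.

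For the second stage, for each $B_j$ I fix a maximal $\Zp$-order $R_j\subset B_j$. In a semisimple $\Qp$-algebra every maximal order is a product of maximal orders in the simple factors $M_{n_i}(D_i)$, and in each such simple factor all maximal orders are conjugate (to $M_{n_i}(\O_{D_i})$) by elements of the unit group. Hence, whenever $\End^0(A)\otimes\Qp\cong B_j$, after an inner automorphism of $B_j$ we may arrange $\End(A)\otimes\Zp\subset R_j$, which changes $\End(A)\otimes\Zp$ only by ring isomorphism. By Theorem~\ref{11} there is $N=N(g)$ with $v_p(\ci(\End(A)))<N$; using that a maximal $\Z$-order in $\End^0(A)$ remains maximal after tensoring with $\Zp$ and that the $p$-part of the index of two $\Z$-lattices equals the $\Zp$-index of their $p$-adic completions, this translates into $[R_j:\End(A)\otimes\Zp]<p^N$. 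But $R_j$ is a finitely generated free $\Zp$-module, and a free $\Zp$-module of finite rank has only finitely many $\Zp$-sublattices of index less than $p^N$. Hence only finitely many isomorphism classes of the ring $\End(A)\otimes\Zp$ can arise inside $B_j$, and summing over the finite list $\{B_j\}$ gives the theorem.

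The main technical point to manage carefully is the passage from the global co-index appearing in Theorem~\ref{11} to the $\Zp$-local index needed for the sublattice-counting step: one must know that a global maximal order stays maximal after completion at $p$, and that the $p$-adic valuation of the global index coincides with the $\Zp$-index of the completions. Both facts are standard for orders in semisimple $\Q$-algebras, but they are the bridge that turns Theorem~\ref{11} into a usable $\Zp$-bound. Beyond this bookkeeping, the only inputs are the classification of semisimple algebras over local fields (combined with Krasner) and the elementary finiteness of sublattices of bounded index in a finitely generated free $\Zp$-module; neither step involves fresh difficulty.
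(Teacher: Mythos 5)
Your proposal is correct and follows essentially the same route as the paper: first invoke Tate's bound $[\End^0(A)\otimes\Q_p:\Q_p]\le 4g^2$ together with the classification of semisimple $\Q_p$-algebras of bounded dimension to get finitely many possible algebras, then use Theorem~\ref{11} to bound the index in a maximal order and count sublattices. The paper states the second stage in one sentence; your conjugacy-of-maximal-orders and bounded-index-sublattice argument is exactly the intended justification, so there is nothing to add.
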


One can also deduce easily from Theorem~\ref{11} the following:

\begin{cor}\label{13}
   Let $g\ge 1$ be an integer. There are only finitely
  many isomorphism classes of endomorphism rings of
  $g$-dimensional supersingular abelian varieties over an \ac
  field $k$ of \ch $p>0$.
\end{cor}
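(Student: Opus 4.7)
My plan is to bound the co-index $\ci(\End(A))$ uniformly in $A$ and then invoke a general finiteness result for orders with bounded co-index in a fixed $\Q$-algebra. Let $A$ be a $g$-dimensional supersingular abelian variety over $k$. By the classical structure theorem for supersingular endomorphism algebras, $\End^0(A)$ is isomorphic to the fixed $\Q$-algebra $B := M_g(D_{p,\infty})$, where $D_{p,\infty}$ is the quaternion algebra over $\Q$ ramified exactly at $p$ and $\infty$. Thus every $\End(A)$ is an order in the single semi-simple algebra $B$, and to prove the corollary it suffices to show that $\ci(\End(A))$ is bounded as $A$ varies.

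Theorem~\ref{11} immediately bounds the $p$-part: $v_p(\ci(\End(A))) < N$ for some $N = N(g)$. For primes $\ell \neq p$ I would prove the stronger statement that $\End(A) \otimes \Z_\ell$ is already a maximal order in $B \otimes \Q_\ell \cong M_{2g}(\Q_\ell)$, so that $v_\ell(\ci(\End(A))) = 0$. The input is that $A$ is $k$-isogenous to $E^g$ for some supersingular elliptic curve $E$ over $k$, whose ring $\mathcal R := \End(E)$ is a maximal order in $D_{p,\infty}$. Using the classical description of the isogeny class of $E^g$ by full right $\mathcal R$-lattices $L \subset D_{p,\infty}^g$, one has $\End(A) \cong \End_{\mathcal R}(L)$. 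Since $\mathcal R \otimes \Z_\ell \cong M_2(\Z_\ell)$ and any finitely generated projective right $M_2(\Z_\ell)$-module is free by Morita theory, $L \otimes \Z_\ell \cong M_2(\Z_\ell)^g$, yielding $\End(A) \otimes \Z_\ell \cong M_g(M_2(\Z_\ell)) = M_{2g}(\Z_\ell)$, a maximal order. Combining both primes, $\ci(\End(A))$ divides $p^N$.

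It remains to invoke the general fact that, in a fixed finite-dimensional semi-simple $\Q$-algebra $B$, only finitely many isomorphism classes of orders have co-index bounded by a given constant $M$. Indeed, $B$ has only finitely many isomorphism classes of maximal orders (finiteness of class numbers for orders in semi-simple $\Q$-algebras), and within any maximal order $\mathcal O$ the sub-orders $R$ with $[\mathcal O : R] \le M$ all contain $M \cdot \mathcal O$, hence correspond to unital subrings of the finite quotient ring $\mathcal O / M \mathcal O$, which are finite in number. The step where I expect the most care is the lattice description of $\End(A)$ for $\ell \neq p$ over a general algebraically closed $k$; but since endomorphism rings of abelian varieties are invariant under algebraically closed base change, one can reduce to $k = \overline{\F}_p$, where this description is classical.
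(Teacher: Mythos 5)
Your overall strategy is the intended one: the paper gives no written proof of Corollary~\ref{13} beyond the remark that it follows easily from Theorem~\ref{11}, and the deduction it has in mind is exactly yours --- $\End^0(A)\cong M_g(D_{p,\infty})$ is one fixed semi-simple $\Q$-algebra, the $p$-part of $\ci(\End(A))$ is bounded by Theorem~\ref{11}, the prime-to-$p$ part vanishes, and orders of bounded co-index in a fixed algebra form finitely many isomorphism classes.

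One intermediate step is justified by an appeal to a false statement, although the conclusion you draw from it is true. For $g\ge 2$ the isogeny class of $E^g$ over $\overline{\F}_p$ is \emph{not} classified by full right $\mathcal R$-lattices $L\subset D_{p,\infty}^g$ with $\End(A)\cong\End_{\mathcal R}(L)$: as the introduction of the paper emphasizes (via Li--Oort), the supersingular locus has dimension $[g^2/4]>0$, so the isogeny class contains a positive-dimensional family of pairwise non-isomorphic abelian varieties, while lattices form a countable discrete set. The lattice description controls only prime-to-$p$ isogenies (equivalently, it classifies the superspecial members). The local fact you actually need is better obtained directly: by Lemma~\ref{23} the inclusion $\End(A)\otimes\Z_\ell\hookrightarrow\End_{\Z_\ell}(T_\ell A)\cong M_{2g}(\Z_\ell)$ has torsion-free cokernel, and since $\End^0(A)\otimes\Q_\ell\cong M_{2g}(\Q_\ell)$ both sides are free $\Z_\ell$-modules of the same rank $4g^2$, so the inclusion is an equality and $\End(A)\otimes\Z_\ell$ is maximal. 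Finally, a small slip in the last step: a suborder $R\subset\mathcal O$ of index $m\le M$ contains $m\mathcal O$, hence $M!\,\mathcal O$, but not necessarily $M\mathcal O$; in your situation the index is a $p$-power at most $p^N$, so $p^N\mathcal O\subset R$ and the finiteness count goes through unchanged.
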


As pointed out by the referee, Theorem~\ref{11} generalizes the
following classical result of Deuring \cite{deuring}. See Lang's book
\cite{lang:ef}, Chapter 13 for a modern exposition. 

\begin{thm}[\bf Deuring]\label{deuring}  
  Let $E$ be an elliptic curve over an \ac field of prime \ch
  $p$. Then its endomorphism ring $\End(E)$ is either $\Z$, a maximal
  order in the definite quaternion $\Q$-algebra of discriminant $p$, 
  or an order in an imaginary
  quadratic field whose conductor is prime to $p$. In particular, the
  index of $\End(E)$ in a maximal order of $\End^0(E)$ is prime to $p$. 
\end{thm}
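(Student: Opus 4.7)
The plan is first to narrow $\End^0(E)=\End(E)\otimes\Q$ to three cases via Albert's classification, then to analyse the integral structure locally at every prime.

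Since $E$ is simple of dimension one, $\End^0(E)$ is a division $\Q$-algebra with positive Rosati involution, and the rank bound $\dim_\Q\End^0(E)\le (2\dim E)^2=4$ combined with Albert's classification forces $\End^0(E)$ to be $\Q$, an imaginary quadratic field $K$, or a totally definite quaternion $\Q$-algebra $D$. The case $\End^0(E)=\Q$ yields $\End(E)=\Z$ at once. In either remaining case $E$ has nontrivial endomorphisms, so its $j$-invariant is algebraic over $\F_p$, and I may assume $E$ is defined over a finite field $\F_q\subset\overline{\F}_p$ and carries a Frobenius endomorphism $\pi\in\End(E)$ with $\pi\bar\pi=q$.

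For the supersingular case $\End^0(E)=D$: at each $\ell\ne p$, faithful rank-two action on $T_\ell E$ forces $D\otimes\Q_\ell\cong M_2(\Q_\ell)$, and Tate's theorem applied to $E/\F_q$ identifies $\End(E)\otimes\Z_\ell$ with $M_2(\Z_\ell)$. Positivity of Rosati places $D\otimes\R$ in the Hamiltonians, and the parity of the ramification locus of a quaternion $\Q$-algebra then pins $D$ down as the definite one of discriminant $p$. At $p$, Dieudonn\'e theory identifies $\End(E[p^\infty])$ with the maximal order of $D\otimes\Q_p$ (coming from the unique supersingular $p$-divisible group of height two), and the faithful inclusion $\End(E)\otimes\Z_p\hookrightarrow\End(E[p^\infty])$ is an equality: $\End(E)\otimes\Z_p$ is a $\Z_p$-order of full rank in the local division algebra $D\otimes\Q_p$, and such an algebra admits a unique maximal $\Z_p$-order.

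For the ordinary CM case $\End^0(E)=K$: $E$ cannot be supersingular (else $\End(E)\otimes\Z_p$ would contain a non-commutative order, violating commutativity of $K$), so $E$ is ordinary and $p$ splits in $K$ as $\mathfrak p\bar{\mathfrak p}$, giving $O_K\otimes\Z_p=\Z_p\times\Z_p$. The ideal decomposition $(\pi)=\mathfrak p^n$, with $v_{\bar{\mathfrak p}}(\pi)=0$ forced by the slopes of the Newton polygon, implies that the $p$-adic coordinates of $\pi$ are of the form $(p^n u,v)$ with $n\ge 1$ and $u,v\in\Z_p^\times$. Since $p^n u-v$ is then a unit in $\Z_p$, a routine calculation shows $\Z_p[\pi]=\Z_p\times\Z_p=O_K\otimes\Z_p$, whence $\End(E)\otimes\Z_p=O_K\otimes\Z_p$ and the conductor of $\End(E)$ in $O_K$ is prime to $p$.

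I expect the main obstacle to be the local maximality at $p$ in the supersingular case. Unlike the $\ell\ne p$ situation, Tate's theorem does not directly apply at $p$: one must instead identify $\End(E[p^\infty])$ via Dieudonn\'e theory and invoke uniqueness of the maximal order in the local division algebra $D\otimes\Q_p$. By contrast, in the ordinary case the Frobenius alone generates the full maximal order $O_K\otimes\Z_p$, so the argument there is completely elementary.
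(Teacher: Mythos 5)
The paper itself does not prove this theorem: it is quoted as a classical result of Deuring, with Lang's \emph{Elliptic functions}, Chapter 13, cited for a modern exposition. So your proposal can only be measured against the standard argument, and it does follow that route: Albert's classification to get the three candidate algebras, descent to a finite field, Tate's theorem at $\ell\neq p$, Dieudonn\'e theory at $p$ in the supersingular case, and the unit-root computation with Frobenius in the ordinary case. The ordinary computation ($\Z_p[\pi]=\Z_p\times\Z_p$ because $p^nu-v$ is a unit) is correct, and the identification of $D$ as the quaternion algebra of discriminant $p$ via the parity of the ramification locus is fine.

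There is, however, a genuine gap at exactly the step you flag as the main obstacle. In the supersingular case you assert that the inclusion $\End(E)\otimes\Z_p\hookrightarrow\End(E[p^\infty])$ is an equality because ``$\End(E)\otimes\Z_p$ is a $\Z_p$-order of full rank in $D\otimes\Q_p$, and such an algebra admits a unique maximal $\Z_p$-order.'' This is a non sequitur: uniqueness of the maximal order in a local division algebra only yields the containment $\End(E)\otimes\Z_p\subseteq O_{D\otimes\Q_p}$, not the reverse. A full-rank order in $O_{D\otimes\Q_p}$ need not be maximal (e.g.\ $\Z_p+pO_{D\otimes\Q_p}$), so being full rank proves nothing about maximality. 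What is actually needed is that $\End(E)\otimes\Z_p$ is \emph{saturated} in $\End(E[p^\infty])$, i.e.\ the quotient is torsion-free --- this is precisely Lemma~\ref{23} of the present paper (equivalently, $\End(E)\otimes\Z_p=(\End(E)\otimes\Q_p)\cap\End(E[p^\infty])$). Once you have that, both sides are free $\Z_p$-modules of rank $4$ with torsion-free quotient, so the quotient vanishes and equality follows; combined with the computation that $\End(E[p^\infty])$ is the maximal order (Lemma~\ref{31}/Lemma~\ref{34} here, for the minimal Dieudonn\'e module $\bfM_{(1,1)}$), this closes the argument. Without the saturation statement your proof of the $p$-part of the theorem --- which is the part the paper actually cares about --- is incomplete.
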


Note that by a theorem of Li-Oort \cite{li-oort}, the supersingular
locus $\calS_g$ of the Siegel moduli space $\calA_{g}\otimes \Fpbar$ has
dimension $[{g^2}/{4}]$. In particular, there are infinitely
many non-isomorphic supersingular abelian varieties. It is
a priori not obvious why there should be only finitely many
isomorphism classes in their endomorphism rings. However, since all of
them are given by an isogeny of degree $p^{g(g-1)/2}$ from a
superspecial one (see Li-Oort \cite{li-oort}), 
the finiteness result might be expected. This is
indeed the idea of proving Theorem~\ref{11}. 

The proof of Theorem~\ref{11} uses the following  
universal bounded property due to Manin \cite{manin:thesis}: 
for a fixed integer $h\ge 1$, the degrees
of the {\it minimal isogenies}
$\varphi: X_0\to X$, for all $p$-divisible 
groups $X$ of height $h$ over an \ac field of fixed \ch $p$, are
bounded.  See Section~\ref{sec:04} for the definition and properties
of minimal isogenies. 
F.~Oort asks (in a private conversation) the following question: 
if $X$ is equipped with an
action $\iota$ by an order $\calO$ of a finite-dimensional 
semi-simple algebra over $\Q_p$, 
is there an action $\iota_0$ of $\calO$ on $X_0$ so that the 
minimal isogeny $\varphi$ becomes 
$\calO$-linear? Clearly, such a map $\iota_0:\calO\to \End(X_0)$ is
unique if it exists. The motivation of this question is looking for
 a good notion of minimal isogenies when one considers abelian
 varieties with additional 
structures (polarizations and endomorphisms). 
We confirm his question with positive answer in Section~\ref{sec:04} 
(see Proposition~\ref{47}). This also plays a role in the proof of
Theorem~\ref{11}.  


Theorem~\ref{11} is sharp at least when the ground field $k$ is
algebraically closed. 
Namely, for any prime $\ell\neq {\rm char} (k)$,   
the finiteness for $v_\ell({\rm ci}(\End(A)))$ does not hold in
general. Indeed, we show (see Section \ref{sec:05})

\begin{prop}\label{14}
Let $p$ be a prime number or zero. There exists an abelian variety
$A_0$ over an \ac field $k$ of \ch $p$ so that for any prime $\ell\neq
p$ and any integer $n\ge 1$, there exists an $A \in [A_0]_k$ such that
$v_\ell({\rm ci}(\End(A)))\ge n$. 
\end{prop}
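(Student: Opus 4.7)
The plan is to exhibit a single $A_0 := E^2$ for a carefully chosen elliptic curve $E/k$, and to produce the required $A$'s as quotients of $A_0$ by cyclic $\ell^n$-torsion subgroups generated by ``generic'' points. The mechanism is that the matrix algebra $\End^0(A_0) = M_2(\End^0 E)$ has ample room for orders of arbitrarily large $\ell$-adic co-index, which are realized by varying the lattice $T_\ell A$ inside $V_\ell A_0$.

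For a clean computation I would take $k$ to be an algebraically closed field of characteristic $p$ admitting an elliptic curve $E$ with $\End(E) = \Z$ (for instance $k = \overline{\F_p(t)}$ if $p > 0$, or $k = \overline{\Q}$ if $p = 0$), so that $\End^0(A_0) = M_2(\Q)$ and $\End(A_0) = M_2(\Z)$. Given $\ell \neq p$ and $n \geq 1$, I would fix a $\Z_\ell$-basis $\{e_1, e_2\}$ of $T_\ell E$, put $u := (\ell^{-n} e_1, \ell^{-n} e_2) \in V_\ell A_0 = V_\ell E \oplus V_\ell E$, and let $P_n \in A_0[\ell^n]$ be the $\ell^n$-torsion point whose image in $V_\ell A_0/T_\ell A_0$ is the class of $u$. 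Set $H_n := \langle P_n \rangle$, a cyclic subgroup of order $\ell^n$, and $A_n := A_0/H_n$. Identifying $V_\ell A_n$ with $V_\ell A_0$ via the quotient isogeny $\varphi_n$, we have $T_\ell A_n = L_n := T_\ell A_0 + \Z_\ell u$; at primes $q \neq \ell$, $T_q A_n = T_q A_0$ (since $\varphi_n$ has $\ell$-power degree).

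The ring $\End(A_n) \otimes \Z_\ell$ is then the stabilizer of $L_n$ inside $M_2(\Q_\ell) = \End^0(A_0) \otimes \Q_\ell$ (acting on $V_\ell E \oplus V_\ell E$ as $2 \times 2$ matrices with scalar entries). Splitting the stabilizer condition into $\tilde M(T_\ell A_0) \subset L_n$ and $\tilde M(u) \in L_n$ and computing in coordinates yields
\[
\End(A_n) \otimes \Z_\ell = \left\{ \begin{pmatrix} a & b \\ c & d \end{pmatrix} \in M_2(\Z_\ell) : b, c \in \ell^n \Z_\ell,\ a \equiv d \pmod{\ell^n} \right\},
\]
a sublattice of the maximal order $M_2(\Z_\ell)$ of index $\ell^{3n}$. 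Since $\End(A_n) \otimes \Z_q = M_2(\Z_q)$ for $q \neq \ell$, Lemma~\ref{21} gives $\ci(\End(A_n)) = \ell^{3n}$, so $v_\ell(\ci(\End(A_n))) = 3n \geq n$. The only non-computational ingredient is the standard identification of $\End(A) \otimes \Z_\ell$ with the stabilizer of $T_\ell A$ inside $\End^0(A) \otimes \Q_\ell$; everything else is a short linear-algebra exercise.
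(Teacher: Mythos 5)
Your proof is correct, but it takes a genuinely different route from the paper. The paper works inside the isogeny class of a single CM elliptic curve: it takes an imaginary quadratic field $K$ in which $p$ splits, realizes the orders $\Z+mO_K$ (of co-index $m$) as endomorphism rings of elliptic curves $E^{(m)}$ over $\Qbar$, and then reduces modulo $p$ to an ordinary elliptic curve over $\Fpbar$, using the co-torsion-freeness of the reduction map on endomorphism rings (Oort's lemma, quoted as part of Lemma~\ref{22}) to see that the co-index $m$ survives reduction for $(m,p)=1$. That argument leans on the theory of complex multiplication and good reduction, but it produces one-dimensional examples and realizes every co-index prime to $p$. You instead take $A_0=E^2$ for a non-CM curve $E$ (so $\End(A_0)=M_2(\Z)$) and manufacture non-maximal orders by varying the lattice $T_\ell A$ inside $V_\ell A_0$; the only external input is the saturation statement $\End(A)\otimes\Z_\ell=(\End^0(A)\otimes\Q_\ell)\cap\End(T_\ell A)$, which is exactly the content of Lemma~\ref{23}, together with the local-global formula (\ref{eq:20}). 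Your lattice computation is correct (the stabilizer of $L_n$ is the Eichler-type order you display, of index $\ell^{3n}$ in $M_2(\Z_\ell)$), and the degree-$\ell^n$ isogeny leaves the orders at all $q\neq\ell$, including $q=p$, maximal, so the global co-index is exactly $\ell^{3n}$. The trade-offs: your argument is more elementary and self-contained (no CM theory, no reduction from characteristic zero) and works uniformly for $p=0$ and $p>0$, at the cost of producing abelian surfaces rather than elliptic curves; the paper's examples show the sharper fact that already elliptic curves exhibit unbounded $\ell$-co-index. One cosmetic point: the multiplicativity $\ci(\End(A_n))=\prod_q\ci(\End(A_n)\otimes\Z_q)$ is the product formula (\ref{eq:20}) rather than Lemma~\ref{21}, which only asserts independence of the choice of maximal order.
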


In fact, elliptic curves already provide such examples in
Proposition~\ref{14}. For these examples, there are infinitely
many isomorphism classes of rings $\End(A)\otimes \Z_\ell$ in the
isogeny class for each prime $\ell$ prime to the \ch of ground
field.

The finiteness result (Corollary~\ref{13}) gives rise to 
a new refinement on the supersingular
locus $\calS_g$ arising from arithmetic. We describe now this
``arithmetic'' refinement in the special case where $g=2$. 
Let $V$ be an irreducible component of the supersingular locus
$\calS_2$ in the Siegel 3-fold (with an auxiliary prime-to-$p$ level
structure) over $\Fpbar$. 
It is known 
that $V$ is isomorphic to
$\bfP^1$ over $\Fpbar$. We fix an isomorphism and choose an
appropriate $\F_{p^2}$-structure on $V$ (see Subsection~\ref{sec:53} for
details). For any point $x$ in the Siegel moduli space, write $c_p(x)$
for $v_p(\ci(\End(A_x)))$, where $A_x$ is the underlying abelian
variety of the object $(A_x,\lambda_x,\eta_x)$ corresponding to 
the point $x$. For each integer $m\ge 0$, let
\[ V_m:=\{x\in V; c_p(x)\le m\}. \]
The collection $\{V_m\}_{m\ge 0}$ forms an increasing sequence of 
closed subsets of $V=\bfP^1$. We have (see
Subsection~\ref{sec:54})
\[ V_0=\dots=V_3\subset V_4=V_5\subset V_6=V, \]
and 
\[ V_0=\bfP^1(\F_{p^2}), \quad  V_4=\bfP^1(\F_{p^4}). \]
This refines the standard consideration on $\calS_2$ by superspecial
and non-superspecial points. 



This paper is organized as follows. Sections 2-4 are devoted to the proof
of Theorems~\ref{11} and \ref{12}. Section
~\ref{sec:02} reduces to an analogous statement for $p$-divisible groups
over an \ac field. Section \ref{sec:03} provides necessary information
about minimal \dieu modules. In Section \ref{sec:04} we use 
minimal isogenies to conclude the finiteness of co-indices of
endomorphism rings of $p$-divisible groups, and finish the proof of
Theorem~\ref{12}. 
Section~\ref{sec:05} provides
examples which particularly show that the $\ell$-co-index of the 
endomorphism rings can be arbitrarily large for any prime $\ell\neq p$. 
A special case for the ``arithmetic'' refinement is treated there.


\begin{thank}
  Obviously the present work relies on the work of Manin
  \cite{manin:thesis} and uses the notion of minimal isogenies whose
  significance is pointed out in Li-Oort \cite{li-oort}. The author
  wishes to thank them for the influential papers. 
  He thanks  C.-L.~Chai, U. G\"ortz, F.~Oort and J.-D.~Yu for helpful
  discussions and comments, and the
  referee for careful reading and helpful comments that improve the
  exposition significantly. 
  The manuscript is prepared during the author's stay at 
  Universit\"at Bonn, and some revision is made 
  at l'Institut des Hautes \'Etudes
  Scientifiques. He acknowledges the institutions for 
  kind hospitality and excellent working
  conditions. 
  The research is partially supported by grants NSC
  97-2115-M-001-015-MY3 and AS-98-CDA-M01.
\end{thank}
\section{Reduction steps of Theorem~\ref{11}}
\label{sec:02}

\subsection{Co-Index}
\label{sec:21} 
Let $K$ be a number field, and $O_K$ the ring of integers. 
Denote by $K_v$ the completion of $K$ at a place $v$ of $K$, and
$O_{K_v}$ the ring of integers when $v$ is finite.
Let $B$ be a finite-dimensional semi-simple algebra over $K$,
and let $O$ be an $O_K$-order of $B$. The {\it co-index} of $O$, 
written as $\ci(O)$, is defined to be the index $[R:O]$, where $R$ is a
maximal order of $B$ containing $O$. 
We define the co-index similarly for an order of a finite-dimensional 
semi-simple algebra over a $p$-adic local field. 
For each finite place $v$ of $K$, we write $R_v:=R\otimes_{O_K}
O_{K_v}$ and $O_v:=O\otimes_{O_K} O_{K_v}$. 
From the integral theory of semi-simple algebras
(see Reiner \cite{reiner:mo}), each $R_v$ is a maximal order of
$B\otimes_K K_v$ and we
have $R/O\simeq \oplus_v R_v/O_v$, where $v$ runs through all finite
places of $K$. It follows that 
\begin{equation}
  \label{eq:20}
  \ci(O)=\prod_{v:\text{finite}} \ci (O_v), \quad
  \ci(O_v):=[R_v:O_v]. 
\end{equation}
As the algebra $B$ is determined by
$O$, the co-index $\ci(O)$ makes sense without mentioning the
algebra $B$ containing it.   

\begin{lemma}\label{21}
  The co-index $\ci (O)$ is independent of the choice of a maximal
  order containing it. 
\end{lemma}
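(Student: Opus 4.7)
The plan is to reduce to the local setting and then compare discriminants attached to the reduced trace form. By the decomposition \eqref{eq:20}, it suffices to prove the local analogue: given any $O_{K_v}$-order $O_v$ in $B_v$ and any two maximal $O_{K_v}$-orders $R_v, R_v'$ containing $O_v$, one has $[R_v : O_v] = [R_v' : O_v]$.

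For any full $O_{K_v}$-lattice $\Lambda \subset B_v$, let $d(\Lambda)$ denote the discriminant ideal attached to the reduced trace form $\Trd(xy)$, which is non-degenerate because $B_v$ is semi-simple. The standard change-of-basis identity for bilinear forms relates $d(O_v)$, $d(\Lambda)$, and the index $[\Lambda : O_v]$ in such a way that once $d(O_v)$ and $d(\Lambda)$ are known, the index $[\Lambda : O_v]$ is determined. Applying this to both $\Lambda = R_v$ and $\Lambda = R_v'$ reduces the claim to showing that $d(R_v)$ is the same for every maximal order $R_v$ of $B_v$.

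For this last step, I would observe that the central idempotents of $B_v$ satisfy $x^2 = x$, so are integral over $O_{K_v}$; as they lie in the center $Z(B_v)$, they belong to every maximal $O_{K_v}$-order, and thus any maximal order decomposes compatibly with the Wedderburn decomposition $B_v = \prod_i B_{v,i}$. This reduces matters to the case where $B_v$ is central simple over a finite extension of $K_v$. In that case, any two maximal orders are conjugate by an element of $B_v^{\times}$ (a classical result; see Reiner \cite{reiner:mo}), and since $\Trd$ is invariant under conjugation the two discriminant ideals coincide.

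The main obstacle, though standard, is the conjugacy of maximal orders in a central simple algebra over a local field; the remainder is routine bookkeeping with the reduced trace pairing.
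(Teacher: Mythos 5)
Your proof is correct. The skeleton is the same as the paper's: reduce to the local case via the product formula \eqref{eq:20} and invoke the conjugacy of maximal orders over a complete discretely valued field. Where you diverge is in how the equality of indices is extracted from conjugacy. The paper writes $[R:O]=\vol(R)/\vol(O)$ for a Haar measure on the additive group of $B\otimes K_v$ and uses $\vol(R)=\vol(gRg^{-1})$ (which holds because the additive automorphism $x\mapsto gxg^{-1}$ has module $|\mathrm{Nrd}(g)|^{n}\,|\mathrm{Nrd}(g^{-1})|^{n}=1$, a point the paper leaves implicit). You instead compare discriminant ideals of the reduced trace form, using $d(O_v)=\det(T)^2\,d(\Lambda)$ together with the manifest conjugation-invariance of $\Trd(xy)$; this is purely algebraic, avoids Haar measure, and the invariance of the discriminant is immediate rather than requiring a Jacobian computation. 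You are also more careful than the paper on one point: the conjugacy theorem in Reiner is stated for central simple algebras, and your reduction via central idempotents (which lie in every maximal order since they are integral and central) is exactly the step needed to legitimize the paper's bald assertion that $R'=gRg^{-1}$ in the semi-simple case. Both arguments are complete modulo the classical local conjugacy theorem, which you correctly identify as the only non-routine input.
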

\begin{proof}
  Using the product formula (\ref{eq:20}), it suffices to show 
  the local version of the statement. Therefore, we may assume that 
  $K$ is a $p$-adic local field. If $R'$ is another maximal order
  containing $O$, then $R'=g R g^{-1}$ for some element $g\in
  B^\times$. Since in this case $[R:O]=\vol(R)/\vol(O)$ for any
  Haar measure on $B$, the statement then follows from the equality 
  $\vol(R)=\vol(g R g^{-1})$.  \qed
\end{proof}

\subsection{Base change}
\label{sec:22}

\begin{lemma}\label{22}
  Let $A$ be an abelian variety over a field $k$ and let $k'$ be a
  field extension of $k$, then the inclusion 
  $\End_k(A)\to \End_{k'}(A\otimes
  k')$ is co-torsion-free, that is, the quotient is torsion
  free. Furthermore, we have 
\begin{equation}\label{eq:21}
 \ci[\End_k(A)]\,|\,\ci[\End_{k'}(A\otimes k')]. 
\end{equation}
\end{lemma}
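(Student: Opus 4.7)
The plan is to handle the two assertions separately. For the torsion-freeness of the cokernel, I would argue as follows: given $f \in \End_{k'}(A\otimes k')$ with $nf \in \End_k(A)$ for some integer $n \ge 1$, set $g := nf \in \End_k(A)$. Since $[n]\colon A \to A$ is an isogeny over $k$ with kernel $A[n] \subseteq \ker(g)$, the universal property of the quotient yields a unique $f_0 \in \End_k(A)$ with $g = [n] \circ f_0$. Base-changing this factorization to $k'$ and invoking the uniqueness of factorization there (precomposition with $[n]$ is injective on $\End_{k'}(A\otimes k')$ because $[n]$ is an isogeny), we obtain $f = f_0 \otimes k'$, and hence $f \in \End_k(A)$.

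For the divisibility, I would first use the product decomposition \parref{eq:20} to reduce to the $\ell$-local statement for each prime $\ell$. Writing $R_\ell \subseteq R'_\ell$ for the $\ell$-completions of the endomorphism rings, sitting inside semisimple $\Q_\ell$-algebras $B_\ell \subseteq B'_\ell$, the preceding paragraph yields $R_\ell = R'_\ell \cap B_\ell$. The key is to choose a maximal $\Z_\ell$-order $M_\ell \subset B_\ell$ containing $R_\ell$ that sits inside a maximal $\Z_\ell$-order $M'_\ell \subset B'_\ell$ containing $R'_\ell$. Once such a compatible pair is in place, the identity $M_\ell \cap R'_\ell = M_\ell \cap B_\ell \cap R'_\ell = M_\ell \cap R_\ell = R_\ell$ implies that the inclusion $M_\ell \hookrightarrow M'_\ell$ descends to an injection $M_\ell/R_\ell \hookrightarrow M'_\ell/R'_\ell$, hence $\ci(R_\ell) \mid \ci(R'_\ell)$, which is the desired $\ell$-local divisibility.

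The main obstacle lies in producing these compatible maximal orders $M_\ell \subset M'_\ell$, since the analogous purely algebraic assertion (without further input) can fail. My plan is to exploit the integrality supplied by the abelian variety: both $R_\ell$ and $R'_\ell$ act faithfully on the same $\ell$-adic Tate module $T_\ell A$ (or on the covariant Dieudonn\'e module when $\ell$ equals the characteristic of $k$), and this common integral realization should furnish the required compatible pair of maximal orders by intersecting a natural maximal order of $\End_{\Z_\ell}(T_\ell A)$ containing the image of $R'_\ell$ with $B_\ell$ and extending suitably.
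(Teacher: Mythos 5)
Your treatment of the first assertion is correct and complete: the factorization of $nf$ through $[n]$ and descent of that factorization is exactly the standard argument (the paper merely cites Oort, Lemma~2.1, for this), and it also yields the identity $\End_k(A)=\End^0_k(A)\cap\End_{k'}(A\otimes k')$ that you use later.

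For the divisibility \parref{eq:21}, however, your proposal stops at precisely the point where a proof would have to do real work, and that gap cannot be closed. You have correctly reduced everything to producing a maximal order $M_\ell\supseteq R_\ell$ of $B_\ell$ contained in a maximal order $M'_\ell\supseteq R'_\ell$ of $B'_\ell$ (the paper's own proof simply asserts the existence of such a pair, ``let $O_2$ be a maximal order containing $O_1$ and $\End_{k'}(A\otimes k')$'', without justification), and you rightly suspect that the purely algebraic assertion fails. But the extra input you propose --- the common faithful action on $T_\ell A$ --- does not remove the obstruction: for $B=\Q_\ell(\sqrt d)\subset B'=M_2(\Q_\ell)=\End(V)$ acting on $T=\Z_\ell^2$ one can have $R'=M_2(\Z_\ell)$ maximal while $R=B\cap M_2(\Z_\ell)$ is a non-maximal order, so $\ci(R_\ell)\nmid\ci(R'_\ell)$ and no compatible pair $(M_\ell,M'_\ell)$ exists. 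Worse, this configuration is realized by abelian varieties, so the statement you are asked to prove is itself false at primes $\ell\neq p$: for $E:y^2=x^3+x$ over $\F_7$, Frobenius $\pi$ (with $\pi^2=-7$) swaps the two non-rational points of $E[2]$, so $1+\pi$ does not kill $E[2]$ and hence $\End_{\F_7}(E)=\Z[\sqrt{-7}]$, of co-index $2$ in $\Z[(1+\sqrt{-7})/2]$; on the other hand $\End_{\overline{\F}_7}(E)$ is a maximal order of the quaternion algebra by Theorem~\ref{deuring}, of co-index $1$. Thus $\ci(\End_{\F_7}(E))=2$ does not divide $\ci(\End_{\overline{\F}_7}(E))=1$. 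So no completion of your plan (or of the paper's argument) can establish \parref{eq:21} as stated; at best one can hope to salvage the inequality at the single place $v_p$, which is what the reduction of Theorem~\ref{11} actually requires, and that needs a separate argument rather than the compatible-maximal-order device.
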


\begin{proof}
  The first statement follows from Oort \cite{oort:endo}, Lemma 2.1. 
  For the second statement, we
  choose a maximal order $O_1$ of $\End^0_k(A)$ containing
  $\End_k(A)$. Let $O_2$ be a maximal order of $\End^0_{k'}(A\otimes
  k')$ containing $O_1$ and $\End_{k'}(A\otimes k')$. Since
  $\End_k(A)=\End^0_k(A)\cap \End_{k'}(A\otimes k')$, we have
  the inclusion $O_1/\End_k(A)\subset O_2/\End_{k'}(A\otimes
  k')$. This proves the lemma. \qed
\end{proof}




By Lemma~\ref{22}, we can reduce Theorem~\ref{11} to the case where $k$ is 
algebraically closed.

\subsection{Reduction to $p$-divisible groups}
\label{sec:23}

\begin{lemma}\label{23}
  Let $A$ be an abelian variety over a field $k$. Let $\ell$ be a
  prime, possibly equal to $\char (k)$. The inclusion map
  $\End_k(A)\otimes \Z_\ell \to \End_k(A[\ell^\infty])$ is
  co-torsion-free. Here $A[\ell^\infty]$ denotes the associated
  $\ell$-divisible group of $A$. 
%
\end{lemma}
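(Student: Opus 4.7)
The plan is to reformulate the claim. Set $M := \End_k(A)$, $N := \End_k(A[\ell^\infty])$, and $M_\ell := M \otimes_\Z \Z_\ell$, so that the map in question is the natural inclusion $\iota : M_\ell \hookrightarrow N$ (injectivity is standard, following from the fact that a nonzero $\phi\in M$ cannot lie in $\bigcap_n \ell^n M = 0$ and from $\Z_\ell$-flatness). Since $N$ is a $\Z_\ell$-module, the quotient $N/M_\ell$ is torsion-free precisely when $M_\ell$ is $\ell$-saturated in $N$, i.e., when $f \in N$ with $\ell f \in \iota(M_\ell)$ forces $f \in \iota(M_\ell)$. This is the statement I would establish.

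The argument rests on two auxiliary facts. \textbf{Fact A:} the reduction map $M/\ell M \to \End_k(A[\ell])$ is injective. Indeed, the target is annihilated by $\ell$, so the map factors through $M/\ell M$; conversely, if $\phi \in M$ restricts to zero on $A[\ell]$, it vanishes on the kernel of the isogeny $[\ell] : A \to A$, which is faithfully flat with scheme-theoretic kernel $A[\ell]$, so $\phi$ factors as $\phi = \ell \phi'$ for a unique $\phi' \in \End_k(A)$. \textbf{Fact B:} $N$ is $\ell$-torsion-free. This holds because $N$ embeds into the endomorphism ring of the $\ell$-adic Tate module (when $\ell \ne \char k$) or of the covariant Dieudonn\'e module of $A[p^\infty]_{\ol{k}}$ (when $\ell = \char k = p$), and both target rings are free over $\Z_\ell$ or $W(\ol{k})$ respectively.

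Since $M$ is a free $\Z$-module of finite rank, $M_\ell / \ell M_\ell$ is canonically identified with $M/\ell M$, so Fact A upgrades to an injection $M_\ell / \ell M_\ell \hookrightarrow \End_k(A[\ell])$. Now suppose $f \in N$ and $\ell f = \iota(\phi)$ for some $\phi \in M_\ell$. Restricting both sides to $A[\ell]$, the left side vanishes, so the image of $\phi$ in $\End_k(A[\ell])$ is zero. By the injection just noted, $\phi \in \ell M_\ell$; write $\phi = \ell \phi_0$ with $\phi_0 \in M_\ell$. Then $\ell(f - \iota(\phi_0)) = 0$ in $N$, and Fact B yields $f = \iota(\phi_0) \in \iota(M_\ell)$, completing the proof.

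The delicate step is Fact A in the case $\ell = \char k$: although $[\ell]$ is then inseparable, it remains a finite faithfully flat surjection of group schemes with scheme-theoretic kernel $A[\ell]$, so the cokernel factorization of any morphism killing $A[\ell]$ is automatic in the fppf category and is automatically defined over $k$. No Tate-conjecture input is needed: everything follows from the universal property of the isogeny $[\ell]$ together with torsion-freeness on the Tate/Dieudonn\'e side.
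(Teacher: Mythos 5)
Your proof is correct and is essentially the argument the paper has in mind: the paper simply cites Tate's classical saturation argument (an endomorphism of $A$ killing $A[\ell]$ factors through $[\ell]$) and notes it works verbatim at $\ell=\char(k)$, which is exactly your Fact A combined with the $\ell$-torsion-freeness of $\End_k(A[\ell^\infty])$.
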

\begin{proof}
  When $\ell\neq \char (k)$, this is elementary and well-known;
  see Tate \cite{tate:eav}, p.~135. The same argument also shows the case 
  when $\ell=\char (k)$.\qed   
\end{proof}

We remark that 
for an arbitrary ground field $k$, the endomorphism algebra
$\End^0_k(A[\ell^\infty]):=\End_k(A[\ell^\infty])\otimes_{\Z_\ell}
\Q_\ell$ of the associated $\ell$-divisible group 
$A[\ell^\infty]$ of an abelian variety $A$ over $k$,  
where $\ell$ is a prime $\neq \char (k)$, may not be
semi-simple; see Subsection~\ref{sec:55}. Therefore, the numerical
invariant $\ci(\End_k(A[\ell^\infty]))$ may not be defined in general. 
Analogously, in the case where $\char (k)=p>0$, 
the endomorphism algebra $\End^0_k(X):=\End_k(X)\otimes_{\Z_p} \Q_p$ of a 
$p$-divisible group $X$ over $k$ may not be semi-simple, and hence
the numerical invariant $\ci(\End_k(X))$ may not be defined in general,
either. See Subsection~\ref{sec:56}. However, when the ground field
$k$ is algebraically closed, both $\ci(\End(A[\ell^\infty]))$ and
$\ci(\End(A[p^\infty]))$ are always defined for any abelian variety $A$. 

\begin{lemma}\label{24} 
Let $A$ be an abelian variety over an \ac field $k$ of \ch 
$p>0$. Then one has 
\begin{equation}\label{eq:22}
 v_p(\ci(\End_k(A))) \le v_p(\ci(\End_k(A[p^\infty]))).   
\end{equation}
\end{lemma}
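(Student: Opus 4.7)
The plan is to reduce the inequality to a local comparison of indices at $p$. By the product formula \eqref{eq:20}, one has $v_p(\ci(\End_k(A))) = v_p([R_1 : O_1])$ and $v_p(\ci(\End_k(A[p^\infty]))) = v_p([R_2 : O_2])$, where $O_1 := \End_k(A) \otimes \Z_p$, $O_2 := \End_k(A[p^\infty])$, $B_i := O_i \otimes_{\Z_p} \Q_p$, and $R_i$ is a maximal $\Z_p$-order of $B_i$ containing $O_i$. Here $B_2$ is semisimple because $k$ is algebraically closed (as noted in the discussion preceding the lemma), and $B_1 \subseteq B_2$ is an inclusion of semisimple $\Q_p$-algebras. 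By Lemma~\ref{23}, the cokernel of $O_1 \hookrightarrow O_2$ is torsion-free, which is equivalent to the identity $O_1 = B_1 \cap O_2$ inside $B_2$.

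The core idea is to choose the maximal orders so that $R_1 \subseteq R_2$ inside $B_2$. Granted this, the inclusion $R_1 \hookrightarrow R_2$ induces a $\Z_p$-module map $R_1/O_1 \to R_2/O_2$ which is injective: if $x \in R_1$ maps to $0$ in $R_2/O_2$, then $x \in R_1 \cap O_2 \subseteq B_1 \cap O_2 = O_1$. This injectivity gives $[R_1 : O_1] \le [R_2 : O_2]$, yielding the desired inequality $v_p(\ci(\End_k(A))) \le v_p(\ci(\End_k(A[p^\infty])))$.

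The main obstacle is to construct such a compatible pair of maximal orders. Starting with a maximal $\Z$-order $R$ of $\End^0_k(A)$ containing $\End_k(A)$, the $p$-localization $R_1 := R \otimes \Z_p$ is a maximal order of $B_1$ containing $O_1$. It remains to find a maximal order $R_2$ of $B_2$ containing both $R_1$ and $O_2$; equivalently, the $\Z_p$-subring of $B_2$ generated by $R_1 \cup O_2$ must be a $\Z_p$-order of $B_2$ (hence extendable to a maximal one). This is nontrivial: in a general inclusion of semisimple $\Q_p$-algebras, sums and products of elements drawn from two distinct maximal orders need not be integral, so the subring they generate need not be finitely generated as a $\Z_p$-module. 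What rescues the situation here is the specific origin of the inclusion $\End^0_k(A) \otimes \Q_p \hookrightarrow \End^0_k(A[p^\infty])$, induced by the natural action of $\End_k(A)$ on the $p$-divisible group $A[p^\infty]$: the elements of $R_1$ interact with $O_2 = \End_k(A[p^\infty])$ in a sufficiently compatible and integral way to ensure finite generation of the subring they jointly generate, completing the construction.
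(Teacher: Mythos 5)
Your reduction is set up correctly, and the second step is fine: once one has maximal orders $R_1\subseteq R_2$, the identity $O_1=B_1\cap O_2$ (from Lemma~\ref{23}) makes $R_1/O_1\to R_2/O_2$ injective and gives the inequality. But the step you yourself flag as the ``main obstacle'' --- producing a maximal order $R_2$ of $B_2$ containing both $R_1$ and $O_2$ --- is where the entire content of the lemma lies, and your final paragraph does not prove it: the claim that the elements of $R_1$ ``interact with $O_2$ in a sufficiently compatible and integral way'' is an assertion, not an argument. Worse, no purely order-theoretic argument from the data you have retained can close this gap, because the abstract statement is false. Take $B_2=M_2(\Q_p)$ with $O_2=M_2(\Z_p)$ (already maximal, so $\ci(O_2)=1$), and let $B_1=\Q_p(\sqrt{p})$ be embedded by $\sqrt{p}\mapsto\left(\begin{smallmatrix}0&1/p\\ p^{2}&0\end{smallmatrix}\right)$. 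Then $O_1:=B_1\cap O_2=\Z_p[p\sqrt{p}]$ has co-index $p$ in the maximal order $\Z_p[\sqrt{p}]$, so $v_p(\ci(O_1))=1>0=v_p(\ci(O_2))$; correspondingly the subring of $B_2$ generated by $\Z_p[\sqrt{p}]$ and $M_2(\Z_p)$ contains $e_{22}/p$ (multiply the image of $\sqrt{p}$ by matrix units) and hence is not an order. Thus ``inclusion of semisimple $\Q_p$-algebras with $O_1=B_1\cap O_2$'' is genuinely insufficient, and the geometric origin of the inclusion must be used in a concrete way.

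The paper supplies exactly the missing construction, and it is geometric rather than algebraic: it produces a $p$-power isogeny $\varphi:A\to A'$, of minimal degree with this property, such that $\End(A')\otimes\Z_p$ equals the chosen maximal order $R$ of $\End^0(A)\otimes\Q_p$. One then has $R\subseteq\End(A'[p^\infty])$ and $\End(A[p^\infty])\subseteq\End(A'[p^\infty])$, so $\End(A'[p^\infty])$ is a common order of $B_2$ containing both $R$ and $O_2$, and the index comparison you outline goes through with any maximal order containing $\End(A'[p^\infty])$ in the role of $R_2$. To complete your proof you would need to carry out this realization of $R_1$ as the $p$-adic endomorphism ring of an isogenous abelian variety (or some equivalent device); as written, the argument is incomplete at its crucial point.
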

\begin{proof}
Let $R$ be a maximal order of $\End(A)\otimes \Q_p$ containing
$\End(A)\otimes \Zp$. 
Then there is an isogeny $\varphi: A\to A'$ of $p$-power 
degree over $k$ such that $\End(A')\otimes \Zp=R$. We may assume that 
the degree of this isogeny is minimal among isogenies with this
property. Then we have $\End(A[p^\infty])\subset \End(A'[p^\infty])$. As 
$\End(A)\otimes \Qp\cap \End(A[p^\infty])=\End(A)\otimes \Zp$, we have 
the inclusion $R/(\End(A)\otimes \Zp)\subset 
\End(A'[p^\infty])/(\End(A[p^\infty]))$. This yields the inequality 
(\ref{eq:22}). \qed  
\end{proof}

By Lemmas~\ref{22}
and~\ref{24}, Theorem~\ref{11} follows from the following theorem.


\begin{thm}\label{25}
  Let $k$ be an \ac field of \ch $p>0$ and let $h\ge 1$ be a fixed integer.
  Then there is an integer $N>1$, depending only on $h$, such that for
  any $p$-divisible group $X$ of height $h$ over $k$, one has
  $v_p(\ci(\End(X)))\le N$.     
\end{thm}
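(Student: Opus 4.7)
My plan is to construct, for each $p$-divisible group $X$ of height $h$ over $k$, a canonical maximal $\Z_p$-order $R\subset\End^0(X)$ that is quantitatively close to $\End(X)$ in a way depending only on $h$, and then convert this closeness into a co-index bound via the discriminant of the reduced trace form on $\End^0(X)$.

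First I would recall two inputs that are established in Sections~\ref{sec:03}--\ref{sec:04}. The Dieudonn\'e--Manin classification indexes isogeny classes of height-$h$ $p$-divisible groups over $k$ by finitely many Newton polygons, and for each such polygon $\nu$ there is a minimal $p$-divisible group $X_0(\nu)$ whose endomorphism ring $\End(X_0(\nu))$ is a maximal $\Z_p$-order in $\End^0(X_0(\nu))$. Moreover, by Manin's universal bound on minimal isogenies, there exists an integer $M=M(h)\ge 0$ such that every height-$h$ $p$-divisible group $X$ admits an isogeny $\varphi\colon X_0(\nu(X))\to X$ with $\deg\varphi\mid p^M$; hence $p^M$ kills $\ker\varphi$, and there is a complementary isogeny $\psi\colon X\to X_0$ satisfying $\psi\varphi=[p^M]_{X_0}$ and $\varphi\psi=[p^M]_X$.

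Next I would transfer the maximal order across $\varphi$. Conjugation by $\varphi$ yields a $\Q_p$-algebra isomorphism $c_\varphi\colon\End^0(X_0)\xrightarrow{\sim}\End^0(X)$, $g\mapsto\varphi g\varphi^{-1}$, and I set $R:=c_\varphi(\End(X_0))$, which is then a maximal $\Z_p$-order in $B:=\End^0(X)$. Using the relation $\varphi^{-1}=p^{-M}\psi$ in the isogeny category, I would verify the symmetric sandwich
\[
p^M R\ \subset\ \End(X)\ \subset\ p^{-M}R
\]
via the explicit computations: for $g\in\End(X_0)$ one has $p^M\,(\varphi g\varphi^{-1})=\varphi g\psi\in\End(X)$; for $f\in\End(X)$ the element $\psi f\varphi$ lies in $\End(X_0)$, and $p^M f=\varphi(\psi f\varphi)\varphi^{-1}\in R$.

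Finally I would extract the co-index bound by a discriminant computation. Let $\Lambda:=\End(X)$, let $R^*$ be any maximal $\Z_p$-order containing $\Lambda$, and let $d(\cdot)$ denote the discriminant of the reduced trace form on $B$. Using the change-of-lattice identity $v_p(d(\Lambda_1))=v_p(d(\Lambda_2))+2\,v_p([\Lambda_2\!:\!\Lambda_1])$ for $\Lambda_1\subset\Lambda_2$ of full $\Q_p$-span, the equality $v_p(d(R^*))=v_p(d(R))$ for any two maximal orders in $B$, and the inclusion $p^MR\subset\Lambda$, I obtain
\[
v_p([R^*\!:\!\Lambda])=\tfrac12\bigl(v_p(d(\Lambda))-v_p(d(R^*))\bigr)\le\tfrac12\bigl(v_p(d(p^MR))-v_p(d(R))\bigr)=M\cdot\rank_{\Z_p}R\le Mh^2.
\]
Thus $v_p(\ci(\End(X)))\le Mh^2=:N$, with $N$ depending only on $h$. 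The main obstacle is really the establishment of the two structural inputs in Sections~\ref{sec:03}--\ref{sec:04}: the maximality of $\End(X_0(\nu))$ for every Newton polygon $\nu$ of height $h$, and the universal Manin bound on the degree of the minimal isogeny $X_0(\nu(X))\to X$. Granted these, the final step above is essentially a formal lattice computation.
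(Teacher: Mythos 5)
Your argument is correct, and it rests on the same three structural inputs as the paper's proof of Theorem~\ref{25}: the existence of the minimal isogeny $\varphi\colon X_0\to X$ (Corollary~\ref{43}), Manin's uniform bound $\deg\varphi\le p^{M(h)}$ (Theorem~\ref{45}), and the maximality of $\End(X_0)$ for $X_0$ minimal (Lemma~\ref{34}). Where you genuinely diverge is in the final step. The paper uses Proposition~\ref{47} (equivalently, the explicit construction of the minimal overmodule in Lemma~\ref{46}) to show that every endomorphism of the Dieudonn\'e module $M$ extends to $M^{\rm min}$, so that $\End(M)$ is actually \emph{contained} in the maximal order $\End(M^{\rm min})$; the co-index is then read off directly from the chain $p^{N_2}\End(M^{\rm min})\subset\End(M)\subset\End(M^{\rm min})$. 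You instead transport $\End(X_0)$ by conjugation to a maximal order $R\subset\End^0(X)$ which need not contain $\End(X)$, prove only the two-sided sandwich $p^MR\subset\End(X)\subset p^{-M}R$ (a purely formal consequence of $\psi\varphi=[p^M]$, requiring no functoriality of the minimal isogeny), and then compare $R$ with a maximal order $R^*\supset\End(X)$ via the discriminant of the reduced trace form, using that all maximal orders in a semi-simple $\Q_p$-algebra have the same discriminant valuation. This buys you independence from Proposition~\ref{47} at the cost of some (standard and correctly executed) lattice--discriminant bookkeeping; the paper's route identifies the ambient maximal order $\End(M^{\rm min})$ explicitly, which is also what feeds the concrete computations of Section~\ref{sec:05}. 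Both arguments yield a bound of the shape $M(h)\cdot h^2$.
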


\section{Minimal \dieu modules}
\label{sec:03}

\subsection{Notation}
\label{sec:31}
In Sections~\ref{sec:03} and~\ref{sec:04}, we let
$k$ denote an \ac field of \ch $p>0$. 
Let $W:=W(k)$ be the ring of Witt
vectors over $k$, and $B(k)$ be the fraction field of $W(k)$. Let
$\sigma$ be the Frobenius map on $W$ and $B(k)$, respectively. 
For each $W$-module $M$ and each subset $S\subset M$, we denote by $\<S\>_W$
the $W$-submodule generated by $S$. Similarly, $\<S\>_{B(k)}\subset
M\otimes \Q_p$ denotes the vector subspace over $B(k)$ generated by
$S$. In this paper we use the covariant \dieu theory.
\dieu modules considered here are assumed to be finite and free as 
$W$-modules. Let $\calD\calM$ denote the category of \dieu modules
over $k$.

To each rational number $0\le \lambda\le 1$, one associates coprime
non-negative integers $a$ and $b$ so that $\lambda=b/(a+b)$.    
For each pair $(a,b)\neq (0,0)$ of coprime non-negative integers, write
$M_{(a,b)}$ for the \dieu module $W[F,V]/(F^a-V^b)$.

We write a Newton polygon or a slope sequence $\beta$ as a finite
formal sum:
\[ \sum_i r_i \lambda_i\quad \text{or}\quad \sum_i r_i (a_i,b_i), \]
where
each $0\le \lambda_i\le 1$ is a rational number, $r_i\in \bbN$ is a
positive integer, 
and $(a_i,b_i)$ is the pair associated to $\lambda_i$ (By convention,
the multiplicity of $\lambda_i$ is $b_i r_i$).
The Manin-\dieu Theorem  (\cite{manin:thesis}, Chap. II,
``Classification Theorem'', p.~35) asserts 
that for any \dieu module $M$ over $k$, there are distinct coprime
non-negative pairs $(a_i,b_i)\neq (0,0)$, and positive integers $r_i$, for
$i=1,\dots, s$, such that there is an isomorphism of $F$-isocrystals
\begin{equation}
  \label{eq:31}
  M\otimes \Q_p\simeq \bigoplus_{i=1}^s (M_{(a_i,b_i)}\otimes 
\Q_p)^{\oplus r_i}.
\end{equation}
Moreover, the pairs $(a_i,b_i)$ and integers $r_i$ are uniquely
determined by 
$M$. The Newton polygon of $M$ is defined to be 
$\sum_{i=1}^s r_i (a_i,b_i)$; the rational numbers
$\lambda_i=b_i/(a_i+b_i)$ are 
called the slopes of $M$. The Newton polygon of he \dieu module
$M_{(a,b)}$ 
above has single slope $\lambda=b/(a+b)$.  


  
The $F$-subisocrystal $N_{\lambda_i}$ of $M\otimes \Qp$ that
corresponds to the factor $(M_{(a_i,b_i)}\otimes 
\Q_p)^{\oplus r_i}$ in (\ref{eq:31}) is unique and is called the {\it
  isotypic component of $M\otimes \Qp$ of slope
  $\lambda_i=b_i/(a_i+b_i)$}. A \dieu module or an $F$-isocrystal is
called {\it isoclinic} if it has single slope, or equivalently
$M\otimes \Q_p$ is an isotypic component of itself. 

If $M$ is a \dieu module over $k$, the endomorphism ring 
$\End(M)=\End_\DM(M)$
is the ring of endomorphisms on $M$ in the category $\DM$; we write
$\End^0(M):=\End(M)\otimes_{\Zp} \Q_p$ for the endomorphism algebra of
$M$. 
If the Newton polygon of $M$ is $\sum_{i=1}^s r_i
(a_i,b_i)$, then the endomorphism algebra of $M$ is isomorphic to 
the product of the matrix algebras $M_{r_i}(\End^0(M_{(a_i,b_i)}))$. 

\begin{lemma}\label{31} \
\begin{itemize}
  \item[(1)] The endomorphism algebra $\End^0(M_{(a,b)})$ is
    isomorphic to 
    \begin{equation}
      \label{eq:32}
      B(\F_{p^n})[\Pi'], \quad (\Pi')^n=p^b, \quad c \Pi'= \Pi'
      \sigma(c), \quad \forall\, c\in B(\F_{p^n}).
    \end{equation}
   Therefore, $\End^0(M_{(a,b)})$ is a central division algebra over
   $\Qp$ of degree $n^2$ with Brauer invariant $b/n$. 
 \item [(2)] The maximal order of the division algebra
   $B(\F_{p^n})[\Pi']$ is $W(\F_{p^n})[\Pi]$, where $\Pi=(\Pi')^m p^{m'}$
   for some integers $m$ and $m'$ such that $bm+nm'=1$, subject to the
   following relations 
   \begin{equation}
     \label{eq:33}
     \Pi^n=p, \quad\text{and}\quad c
   \Pi=\Pi \sigma^{m}(c)\quad  \forall\, c\in W(\F_{p^n}) 
   \end{equation}
\end{itemize}
\end{lemma}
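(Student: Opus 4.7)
The plan is to describe $\End^0(M_{(a,b)})$ concretely and recognize it as a cyclic division algebra. Substituting $V=pF^{-1}$ in the isocrystal, the defining relation $F^a=V^b$ becomes $F^n=p^b$ with $n:=a+b$, so $M_{(a,b)}\otimes\Q_p$ is the cyclic left module $B(k)\{F\}/(F^n-p^b)$ over the twisted polynomial ring $B(k)\{F\}$ (with $Fc=\sigma(c)F$), with $B(k)$-basis $\bar 1,\bar F,\ldots,\bar F^{n-1}$. I would parametrize endomorphisms by their value at $\bar 1$: for $x\in M_{(a,b)}\otimes\Q_p$, let $\phi_x$ denote the $B(k)$-linear map defined by $\phi_x(\bar F^i):=F^ix$. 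The constraint $\phi_x\circ F=F\circ\phi_x$ reduces, using $\bar F^n=p^b\bar 1$, to the single identity $F^nx=p^bx$ in the module; since $F^n$ is $\sigma^n$-semilinear while $p^b$ is $B(k)$-linear, writing $x=\sum c_i\bar F^i$ unravels this to $\sigma^n(c_i)=c_i$ for each $i$, i.e., $c_i\in B(\F_{p^n})$. Hence $x\mapsto\phi_x$ gives a bijection between the $B(\F_{p^n})$-span of $\bar 1,\ldots,\bar F^{n-1}$ and $\End^0(M_{(a,b)})$.

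Transporting the algebra structure, set $\Pi':=\phi_{\bar F}$ and identify $c\in B(\F_{p^n})$ with $\phi_c\in\End^0(M_{(a,b)})$. The relations $(\Pi')^n=p^b$ and $c\Pi'=\Pi'\sigma(c)$ follow by direct composition calculations, since an endomorphism is determined by its value at $\bar 1$ and both sides of each relation agree there (using that $F(c\bar 1)=\sigma(c)\bar F$ for the second). Because $\gcd(a,b)=1$ forces $\gcd(n,b)=1$, this presentation identifies $\End^0(M_{(a,b)})$ with the cyclic algebra $(B(\F_{p^n})/\Q_p,\sigma,p^b)$, which is the central $\Q_p$-division algebra of degree $n^2$ with Brauer invariant $b/n$ by standard cyclic algebra theory.

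For part (2), I would find a uniformizer of $D:=B(\F_{p^n})[\Pi']$. Since $B(\F_{p^n})/\Q_p$ is unramified of degree $n$ and $[D:\Q_p]=n^2$, the ramification index of $D/\Q_p$ is also $n$; normalizing $v_D$ so uniformizers have valuation $1$ gives $v_D(p)=n$ and, from $(\Pi')^n=p^b$, $v_D(\Pi')=b$. With $\gcd(b,n)=1$, B\'ezout provides $m,m'\in\Z$ with $bm+nm'=1$, and then $\Pi:=(\Pi')^mp^{m'}$ satisfies $v_D(\Pi)=1$. Direct calculation yields $\Pi^n=((\Pi')^n)^mp^{nm'}=p^{bm+nm'}=p$, and iterating $c\Pi'=\Pi'\sigma(c)$ gives $c(\Pi')^m=(\Pi')^m\sigma^m(c)$, whence $c\Pi=\Pi\sigma^m(c)$ for $c\in W(\F_{p^n})$. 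Finally, $W(\F_{p^n})[\Pi]$ is the unique maximal order: it contains both $W(\F_{p^n})$ (the ring of integers of the maximal unramified subfield) and the uniformizer $\Pi$, and as a free $W(\F_{p^n})$-module with basis $1,\Pi,\ldots,\Pi^{n-1}$ it has $\Z_p$-rank $n^2=[D:\Q_p]$. The main obstacle is the careful semilinearity bookkeeping in (1) that forces the coefficients $c_i$ into $B(\F_{p^n})$ rather than all of $B(k)$; once that is settled, the rest is routine cyclic-algebra manipulation.
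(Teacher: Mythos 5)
Your proposal is correct and follows essentially the same route as the paper: realize $M_{(a,b)}\otimes\Q_p$ as the cyclic module generated by $e_0=\bar 1$ with $F^n=p^b$, parametrize endomorphisms by their value on the generator (your semilinearity computation $\sigma^n(c_i)=c_i$ nicely justifies why the coefficients land in $B(\F_{p^n})$, a point the paper only asserts), and identify the maximal order as the valuation ring $W(\F_{p^n})[\Pi]$ via the uniformizer $\Pi=(\Pi')^mp^{m'}$. The only cosmetic difference is that you read off the invariant $b/n$ from the cyclic-algebra presentation $(B(\F_{p^n})/\Q_p,\sigma,p^b)$, whereas the paper computes it directly as $v_p(\Pi')$ under Pierce's normalization.
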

\begin{proof}
  This is certainly well-known; we provide a proof for the reader's
  convenience. Note that using $(\Pi')^n=p^b$ one sees that the
  element $\Pi$ in (2) independent of the choice of the integers $m$
  and $m'$.

  (1) The $F$-isocrystal $N:=M_{(a,b)}\otimes \Qp$ is generated by the
      element $e_0:=1$. Put $e_i:=F^i e_0$ for $i=1,\dots, n-1$; the
      vectors $e_0,\dots,e_{n-1}$ form a $B(k)$-basis for $N$. Since $N$ is
      generated by $e_0$ (as an $F$-isocrystal) and $(F^n-p^b)e_0=0$,
      any endomorphism $\varphi\in \End(N)$ is determined by the
      vector $\varphi(e_0)$ and this vector lies in the subspace
      $\<e_0,\dots, e_{n-1}\>_{B(\F_{p^n})}$. Let $\Pi'$ be the element in
      $\End(N)$ such that $\Pi'(e_0)=e_1$, and for each element $c\in
      B(\F_{p^n})$, let $\varphi_c$ be the endomorphism such that
      $\varphi_c(e_0)=c e_0$. It is not hard to see that 
      the endomorphism algebra $\End(N)$
      is generated by elements $\Pi'$ and $\varphi_c$ for all $c\in
      B(\F_{p^n})$.   
      One checks that $\varphi_c \Pi'= \Pi'
      \varphi_{\sigma(c)}$ for all $c\in B(\F_{p^n})$. This proves the
      first part of (1).  
      One extends the valuation $v_p$ on $\Qp$ naturally to the
      division algebra 
      $B(\F_{p^n})[\Pi']$. According to the definition (we use the
      normalization in \cite{pierce}, see p.~338), the Brauer
      invariant is given by 
      $v_p(\Pi')$, which is equal to $b/n$. Therefore, the statement
      (1) is proved. 

  (2) It is straightforward to check the relations
      (\ref{eq:33}). Using these, any element $c$ in the division
      algebra $B(\F_{p^n})[\Pi']$ can be written uniquely as
\[ c=\Pi^r (c_0+c_1\Pi+\dots+ c_{n-1} \Pi^{n-1}),  \]
for some $r\in \Z$ and some elements $c_i\in W(\F_{p^n})$ for
$i=0,\dots, n-1$ 
such that  $c_0$ is a unit in $W(\F_{p^n})$. The valuation $v_p(c)$ is 
$r/n$. This shows
that the subring $W(\F_{p^n})[\Pi]$ consists of elements $c$ with
$v_p(c)\ge 0$. Since any order of $B(\F_{p^n})[\Pi']$ is contained in the
subring of elements $c$ with $v_p(c)\ge 0$, the order
$W(\F_{p^n})[\Pi]$ is maximal. \qed 
\end{proof}

According to Lemma~\ref{31}, a \dieu module $M$ or an $F$-isocrystal is
isoclinic if and only if its endomorphism algebra is a
(finite-dimensional) central
simple algebra over $\Qp$. A \dieu module $M$ or $M\otimes \Qp$ is called 
{\it isosimple} if its endomorphism algebra is a (finite-dimensional)
central division algebra over $\Q_p$, that is, 
the $F$-isocrystal $M\otimes \Q_p$ is isomorphic to 
$M_{(a,b)}\otimes \Qp$ for some pair $(a,b)$.   

\subsection{Minimal \dieu modules}
\label{sec:32}
Let $(a,b)$ be a pair as above, and let $n:=a+b$. Denote by
$\bfM_{(a,b)}$ the \dieu module over $\Fp$ as follows: it is generated by
elements $e_i$, for $i\ge 0\in \Z$, with relation $e_{i+n}=pe_i$, 
as a $\Zp$-module, 
and with operations $Fe_i=e_{i+b}$ and $Ve_i=e_{i+a}$ for all $i\in
\Z_{\ge 0}$. One extends the maps $F$ and $V$ on $\bfM_{(a,b)}\otimes
W$ by $\sigma$-linearity and $\sigma^{-1}$-linearity, respectively, so
that $\bfM_{(a,b)}\otimes W$ is a \dieu module over $k$. 

Let $\beta=\sum_{i} r_i (a_i,b_i)$ be a Newton polygon. We put
$\bfM(\beta):=\sum_{i} \bfM_{(a_i, b_i)}^{\oplus {r_i}}$. Note that
the \dieu module $\bfM(\beta)$ has Newton polygon $\beta$. Write
$\beta^t:=\sum_{i} r_i (b_i,a_i)$ for the dual of $\beta$.
Denote by
$\bfH(\beta)$ the $p$-divisible group over $\Fp$ corresponding to the
\dieu module $\bfM(\beta^t)$ (This is because we use the covariant
theory; the Newton polygon of a $p$-divisible group $G$ is equal to
the dual of that of its \dieu module $M(G)$).  

\begin{defn} (\cite{dejong-oort:purity}, Section 5) \

 (1) A \dieu module $M$ over $k$ is called {\it minimal} if it is
      isomorphic to $\bfM(\beta)\otimes W$ for some Newton polygon
      $\beta$. In this case
      $\beta$ is the Newton polygon of $M$.


 (2) A $p$-divisible group $X$ over $k$ is called {\it minimal} if its
     associated \dieu module is so.
\end{defn}

Let $M_\lambda$ be an isoclinic \dieu module of slope
$\lambda=\frac{b}{a+b}$ (in reduced form). There exist integers  $x$
and $y$ such that $xa+yb=1$. Put $N_\lambda:=M_\lambda\otimes \Q_p$ and let
$\Pi_0:=F^y V^x$ be an operator on $N_\lambda$; it is
$\sigma^{y-x}$-linear and it depends on the choice of the integers $x$
and $y$. Let 
\begin{equation}
  \label{eq:34}
  \wt N_\lambda:=\{m\in N_\lambda\, |\, F^n m=p^b m \, \}
\end{equation}
be the skeleton of $N_\lambda$; it is a $B(\F_{p^n})$-subspace that has
the same dimension as $N_\lambda$, equivalently $\wt N_\lambda$
generates $N_\lambda$ over $B(k)$. Since $\Pi_0 F=F\Pi_0$, the
operator $\Pi_0$ leaves the subspace $\wt N_\lambda$ invariant. The
restriction of $\Pi_0$ to $\wt N_\lambda$  has the 
following properties:

\begin{itemize}
\item $\Pi_0$ (on $\wt N_\lambda$) is independent of the choice of the
  integers $x$ and $y$, and  
\item $\Pi_0^n=p$, $\Pi_0^b=F$ and $\Pi_0^a=V$ on $\wt N_\lambda$.
\end{itemize}


\begin{lemma}\label{33}
  Notation as above. An isoclinic \dieu module $M_\lambda$ of slope
  $\lambda$ is minimal
  if and only if (i) $F^n M_\lambda=p^b M_\lambda$, and (ii)
  $\Pi_0(M_\lambda)\subset M_\lambda$.
\end{lemma}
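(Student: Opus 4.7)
For the forward direction, I would verify (i) and (ii) by direct computation on $\bfM_{(a,b)}\otimes W$. From $Fe_i=e_{i+b}$ and $e_{i+n}=pe_i$ one has $F^n e_i=e_{i+nb}=p^b e_i$, so $F^n=p^b$ pointwise on $\bfM_{(a,b)}\otimes W$, which is even stronger than (i). For (ii), writing $\Pi_0=F^yV^x$ with $xa+yb=1$ gives $\Pi_0 e_0 = F^yV^x e_0 = e_{xa+yb}=e_1$, and induction yields $\Pi_0 e_i=e_{i+1}$, so $\Pi_0$ preserves $\bfM_{(a,b)}\otimes W$.

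For the converse, suppose (i) and (ii) hold. Condition (i) says $\phi:=p^{-b}F^n$ restricts to a $\sigma^n$-semilinear automorphism of $M_\lambda$, i.e.\ $(M_\lambda,\phi)$ is a unit-root $F^n$-crystal over the algebraically closed field $k$. Standard descent for unit-root crystals (a Lang-torsor argument in $\GL_d(W)$ for the $\sigma^n$-twist) then produces a free $W(\F_{p^n})$-module
\[
\widetilde M:=\{m\in M_\lambda:\phi(m)=m\}=M_\lambda\cap \widetilde N_\lambda
\]
of rank equal to $\rank_W M_\lambda$, with $\widetilde M\otimes_{W(\F_{p^n})}W\isoto M_\lambda$. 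Next, $\Pi_0=F^y V^x$ commutes with $F$, $V$ and $p$ (a short computation using $FV=VF=p$) and hence with $\phi$, so (ii) forces $\Pi_0\widetilde M\subset\widetilde M$. Therefore $\widetilde M$ becomes a left module over the ring $R$ generated inside $\End(\widetilde N_\lambda)$ by $W(\F_{p^n})$ and $\Pi_0$, subject to $\Pi_0^n=p$ and $\Pi_0 c=\sigma^{y-x}(c)\Pi_0$. Since $a\equiv-b\pmod n$, the identity $xa+yb=1$ forces $(y-x)b\equiv 1\pmod n$, so $y-x$ is coprime to $n$ and $\sigma^{y-x}$ generates $\Gal(B(\F_{p^n})/\Q_p)$; by Lemma~\ref{31}(2), $R$ is then the maximal order of a central division $\Q_p$-algebra of degree $n^2$.

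Finally, by the classification of lattices over the maximal order of a local division algebra (Reiner, \emph{Maximal Orders}, Ch.~18), every such $R$-lattice is free, so $\widetilde M\cong R^{\oplus s}$ for some $s\ge 0$. Applying the same construction to $\bfM_{(a,b)}\otimes W$ yields $\widetilde M_{\min}=\bfM_{(a,b)}\otimes W(\F_{p^n})$, which is free of rank~$1$ over $R$ with generator $e_0$ (because $\Pi_0^i e_0=e_i$). Since $F=\Pi_0^b$ and $V=\Pi_0^a$ on the skeleton, the Dieudonn\'e operators on $M_\lambda=\widetilde M\otimes_{W(\F_{p^n})}W$ are the $\sigma$- and $\sigma^{-1}$-semilinear extensions of their restrictions to $\widetilde M$, so any $R$-linear isomorphism $\widetilde M\cong \widetilde M_{\min}^{\oplus s}$ upgrades to a Dieudonn\'e-module isomorphism $M_\lambda\cong(\bfM_{(a,b)}\otimes W)^{\oplus s}=\bfM(s(a,b))\otimes W$, proving $M_\lambda$ is minimal. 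I expect the main technical obstacle to be the unit-root descent; this is classical over algebraically closed $k$ but must be invoked with the $\sigma^n$-twist. A secondary point is verifying that the ring $R$ produced by $\Pi_0$ coincides with the maximal order in Lemma~\ref{31}(2), which reduces to matching the exponent $y-x\pmod n$ with the integer $m$ determined by $bm\equiv 1\pmod n$.
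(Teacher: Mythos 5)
Your proof is correct and follows essentially the same route as the paper: both arguments use condition (i) to reduce to the skeleton $\wt M_\lambda$ and condition (ii) to make $\wt M_\lambda$ a lattice over the maximal order $W(\F_{p^n})[\Pi_0]$, then decompose it into rank-one pieces that transport back to copies of $\bfM_{(a,b)}\otimes W$. The only difference is one of packaging: where you invoke the structure theorem for lattices over maximal orders in local division algebras (Reiner) to get $\wt M\cong R^{\oplus s}$, the paper proves this freeness by hand via a Nakayama-type argument, lifting an $\F_{p^n}$-basis of $\wt M_\lambda/\Pi_0\wt M_\lambda$ to generators $f_i$ and checking directly that each $\<f_i,\Pi_0 f_i,\dots,\Pi_0^{n-1}f_i\>_W$ is a Dieudonn\'e submodule isomorphic to $\bfM_{(a,b)}\otimes W$.
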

\begin{proof}
  It is clear that a minimal isoclinic \dieu module satisfies
  the conditions (i) and (ii). Conversely, suppose $M_\lambda$ satisfies
  the conditions (i) and (ii). The condition (i) implies that
  $M_\lambda$ is generated by the skeleton $\wt M_\lambda$ over
  $W$. Since $\Pi_0 \wt M_\lambda\supset \Pi_0^n \wt M_\lambda=p  \wt
  M_\lambda$, the quotient $\wt M_\lambda/\Pi_0(\wt M_\lambda)$ is a
  finite-dimensional vector space over $\F_{p^n}$.
  Choose elements $f_1,\dots, f_d$ in $\wt M_\lambda$ such 
  that they form an $\F_{p^n}$-basis in 
  $\wt M_\lambda/\Pi_0(\wt M_\lambda)$. For each $i=1,\dots,d$, the
  $W$-submodule $\<f_i, \Pi_0(f_i), \dots, \Pi_0^{n-1}(f_i)\>$ is a
  \dieu submodule of $M$. Since 
  $F$ sends $\Pi_0^j(f_i)$ to $\Pi_0^{j+b}(f_i)$ and $V$ sends
  $\Pi_0^j(f_i)$ to $\Pi_0^{j+a}(f_i)$, this \dieu module is 
  isomorphic to $W\otimes \bfM_{(a,b)}$ by sending $\Pi_0^j(f_i)$ to
  $e_j$. Therefore, $M_\lambda\simeq W\otimes \bfM_{(a,b)}^{\oplus
  d}$. This proves the lemma. \qed   
\end{proof}
  
Let $M$ be a \dieu module. Put $N:=M\otimes \Q_p$. Let 
\[ N=\bigoplus_{\lambda} N_\lambda \]
be the decomposition into isotypic components. Put $M_\lambda:=M\cap
N_\lambda$. 


\begin{lemma}\label{34}\

  (1) A \dieu module $M$ is minimal if and only if its endomorphism
      ring $\End(M)$ is a maximal order of $\End^0(M)$.

  (2) A \dieu module $M$ is minimal if and only if it is isomorphic to
      the direct sum of its isotypic components $M_\lambda$ and each
      factor $M_\lambda$ is minimal.
\end{lemma}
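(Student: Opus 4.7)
The plan is to prove (2) first --- which is essentially immediate from the Manin--Dieudonn\'e classification --- and then use (2) in proving (1), via a chain of reductions: first to the isoclinic case by central idempotents, then to the isosimple case by matrix units, and finally invoking the classification of $\Delta$-lattices in a rank-one module over a $p$-adic division algebra (or alternatively Lemma~\ref{33}).

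For part (2), the forward direction follows from uniqueness of the slope decomposition: if $M\simeq\bigoplus_i(\bfM_{(a_i,b_i)}\otimes W)^{\oplus r_i}$, each summand is isoclinic of a distinct slope $\lambda_i$ and hence equals the isotypic component $M_{\lambda_i}$, which is minimal by construction. The backward direction is immediate: concatenating minimal decompositions of the factors $M_\lambda$ yields one for $M$.

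For part (1) forward: vanishing of $\Hom$ between different slopes gives $\End(M)=\prod_i M_{r_i}(\End(\bfM_{(a_i,b_i)}\otimes W))$. Direct inspection shows that $W(\F_{p^n})$ (acting through a fixed embedding $\F_{p^n}\hookrightarrow k$) and $\Pi_0$ (sending $e_j\mapsto e_{j+1}$) both lie in $\End(\bfM_{(a,b)}\otimes W)$, so this ring contains the subring $W(\F_{p^n})[\Pi_0]$, which by Lemma~\ref{31}(2) is the (unique) maximal order $\Delta$ of $D:=\End^0(\bfM_{(a,b)}\otimes W)$; since any endomorphism ring is contained in some maximal order, equality is forced. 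Thus $\End(M)=\prod_i M_{r_i}(\Delta_i)$ is a product of matrix rings over maximal orders, and hence a maximal order in $\End^0(M)$. For part (1) backward: if $\End(M)$ is maximal in $\End^0(M)=\prod_i M_{r_i}(D_i)$, the primitive central idempotents of $\End^0(M)$ are integral and hence lie in $\End(M)$, splitting $M=\bigoplus_i M^{(i)}$ with each $M^{(i)}$ isoclinic and $\End(M^{(i)})\simeq M_{r_i}(\Delta_i)$ still maximal. By part (2) I reduce to the isoclinic case; the matrix unit idempotents $E_{jj}\in M_r(\Delta)$ then split $M$ into $r$ isomorphic isosimple pieces with endomorphism ring $\Delta$, reducing to the isosimple case.

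In the isosimple case, $M$ is a full $\Zp$-lattice in the rank-one left $D$-module $N:=M\otimes\Qp$, closed under the action of $\Delta\subset D$. Since $\Delta$ is the maximal order in a $p$-adic division algebra --- a non-commutative discrete valuation ring with uniformizer $\Pi_0$ --- every full left $\Delta$-lattice in $N$ is free of rank one over $\Delta$, so any two such lattices lie in a single $D^\times$-orbit, where $d\in D^\times$ acts on $N$ as an $F$-isocrystal automorphism. Hence $M=d\cdot(\bfM_{(a,b)}\otimes W)$ for some $d\in D^\times$, and multiplication by $d$ --- being $B(k)$-linear and commuting with $F$ and $V$ --- is a Dieudonn\'e isomorphism, so $M$ is minimal. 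The main technical obstacle is this last step: establishing transitivity of the $D^\times$-action on full $\Delta$-lattices in $N$ and checking that the action preserves the full Dieudonn\'e (not merely $\Delta$-module) structure, both of which rest on the structure theory of maximal orders in $p$-adic division algebras. An alternative route is to invoke Lemma~\ref{33}: condition (ii) is immediate from $\Pi_0\in\End(M)$, while condition (i) $F^nM=p^bM$ would require analyzing $F$ across the decomposition of $M$ induced by $W(\F_{p^n})\subset\End(M)$ and the $n$ embeddings $\F_{p^n}\hookrightarrow k$.
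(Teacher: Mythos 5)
Your part (2), your ``only if'' direction of (1), and your reductions for the ``if'' direction (integrality of central idempotents, then matrix units) all track the paper's own route (which invokes Morita equivalence for the same purpose). The genuine gap is in the isosimple case of the ``if'' direction, and you half-acknowledge it yourself. You treat $M$ as ``a full $\Z_p$-lattice in the rank-one left $D$-module $N$'' and invoke the structure theory of lattices over the maximal order $\Delta$ of a $p$-adic division algebra. But $N=M\otimes\Q_p$ is a $B(k)$-vector space of dimension $n$, infinite-dimensional over $\Q_p$ for $k$ algebraically closed (the case at hand); $M$ is a $W(k)$-lattice and is not a finitely generated $\Delta$-module. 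The ring that actually acts on $M$ is the image of $\Delta\otimes_{\Z_p}W$ in $\End_{B(k)}(N)\simeq M_n(B(k))$, which for $n>1$ is a non-maximal (hereditary, Iwahori-type) order, so ``every full lattice is free of rank one'' is not available. The honest version of your argument must pass to the skeleton $\wt M=M\cap\wt N$, which genuinely is a full $\Delta$-lattice in the rank-one $D$-module $\wt N$, where $D^\times$-transitivity does hold; but to conclude $M=d\cdot(\bfM_{(a,b)}\otimes W)$ you then need $M=\langle\wt M\rangle_W$, equivalently $F^nM=p^bM$, i.e.\ condition (i) of Lemma~\ref{33}. This is precisely the non-trivial content of the lemma: it fails for general \dieu lattices in $N$ and must be deduced from the maximality of $\End(M)$. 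You name this step ``the main technical obstacle'' in both of your routes but do not carry it out, so the proof is incomplete exactly where the substance lies.

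For comparison, the paper closes this gap as follows: from $M\supset\Pi M\supset\cdots\supset\Pi^nM=pM$ and surjectivity of $\Pi$ on successive quotients one gets $\dim_k M/\Pi M=1$; from ``$x\in\wt M$ iff $\Pi x\in\wt M$'' one gets $\wt M\not\subset\Pi M$; choosing $e_0\in\wt M\setminus\Pi M$, the elements $e_0,\Pi e_0,\dots,\Pi^{n-1}e_0$ generate $M$ over $W$, and Hilbert 90 normalizes $Fe_0=\Pi^be_0$, giving $M\simeq\bfM_{(a,b)}\otimes W$ directly. Some such argument is indispensable and you should supply it. A minor further point: the action of $W(\F_{p^n})$ on $\bfM_{(a,b)}\otimes W$ is not the naive diagonal one through a fixed embedding; to commute with the $\sigma$-semilinear $F$ one must twist, $\varphi_c(e_i)=\sigma^{mi}(c)e_i$ as in the paper. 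This does not affect your conclusion that $\Delta\subset\End(\bfM_{(a,b)}\otimes W)$, but the map as you describe it is not literally an endomorphism.
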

\begin{proof}
  (1) To prove the only if part, it suffices to show when
      $M=\bfM_{(a,b)}$ (for simplicity we write $\bfM_{(a,b)}$
      for $\bfM_{(a,b)}\otimes W$ here). 
      Let $n:=a+b$ and $m\in \Z$ such that $mb\equiv
      1 \mod n$. For each element $c\in W(\F_{p^n})$, we define an
      endomorphism $\varphi_c\in \End_{\calD \calM}(\bfM_{(a,b)})$ by
      $\varphi_c(e_i)=\sigma^{mi}(c) e_i$ for all $i\ge 0$. Let
      $\Pi\in \End_{\calD \calM}(\bfM_{(a,b)})$ be the endomorphism which
      sends $e_i$ to $e_{i+1}$. The endomorphism ring $\End_{\calD
      \calM}(\bfM_{(a,b)})$ is generated by elements $\Pi$ and
      $\varphi_c$ for all $c\in  W(\F_{p^n})$, subject to the
      relations $\Pi^n=p$ and $\Pi \varphi_c= \varphi_{\sigma^{-m}(c)}
      \Pi$. Hence, 
      $\End_{\calD \calM}(\bfM_{(a,b)})\simeq  W(\F_{p^n})[\Pi]$ with
      relations $\Pi^n=p$ and $\Pi c \Pi^{-1}=\sigma^{-m}(c)$ for
      $c\in W(\F_{p^n})$. This is the maximal order in the
      endomorphism algebra $\End^0_{\calD \calM}(\bfM_{(a,b)})$; see
      Lemma~\ref{31}. 

      We prove the if part. First of all, a maximal order is isomorphic to a
      product of matrix rings $M_d(O_D)$, where $D$ is a division
      central algebra over $\Qp$ and $O_D$ is its maximal order.
      Using the Morita equivalence, we can assume that
      $\End^0_{\calD\calM}(M)$ is a division algebra $D$ and
      $\End_{\calD\calM}(M)=O_D$. Let
      $[D:\Qp]=n^2$. One chooses a presentation for
      $O_D=W(\F_{p^n})[\Pi]$ with 
      relations $\Pi^n=p$ and $\Pi c \Pi^{-1}=\sigma^{-m}(c)$ for
      $a\in W(\F_{p^n})$, for some $m\in \Z$. Let $b$ be the integer
      such that $bm\equiv 1\ \mod n$ 
      and $0\le b <n$. Using Lemma~\ref{31}, the division algebra $D$
      has invariant $b/n$, and hence the \dieu module $M$
      has single slope $b/n$. Put $\wt M:=\{x\in M; F^nx=p^b
      x\}$ and $\wt N:=\wt {M\otimes \Q_p}$ be the skeleton of
      $M\otimes \Qp$. It follows from $F\Pi=\Pi
      F$ that $\Pi$ is an automorphism on $\wt N$. It follows
      from $\wt N\cap M=\wt M$ that for $x\in M$, one has $x\in \wt M$ 
      if and only if $\Pi x\in
      \wt M$; this implies $\wt M\not\subset \Pi M$. Choose an element
      $e_0\in \wt M \setminus \Pi M$. Then elements 
      $e_0, \Pi(e_0),\dots, \Pi^{n-1}(e_0)$ generate $M$ over
      $W$. Using 
      $F \Pi=\Pi F$ and $F^n=p^b$ on $\wt M$, one can show that
      $F(e_0)=\alpha \Pi^{b} (e_0)$ for some $\alpha\in
      W(\F_{p^n})^\times$ with $N_{W(\F_{p^n})/\Zp}(\alpha)=1$. By
      Hilbert's 90, one may replace $e_0$ by $\lambda e_0$ so that
      $F(e_0)=\Pi^{b} (e_0)$. This shows $M\simeq \bfM_{(a,b)}$. 

     (2) This is clear. \qed 
\end{proof}

\section{Construction of minimal isogenies}
\label{sec:04}

\subsection{Minimal isogenies}
\label{sec:41}

\begin{defn}\label{41} (cf. \cite{li-oort}, Section 1)
  Let $X$ be a $p$-divisible group over $k$. The minimal isogeny of
  $X$ is a pair $(X_0, \varphi)$ where $X_0$ is a minimal $p$-divisible
  group over $k$, and $\varphi:X_0\to X$ is an isogeny over $k$ such
  that for any other pair $(X_0',\varphi')$ as above there exists an
  isogeny $\rho: X'_0\to X_0$ such that
  $\varphi'=\varphi\circ\rho$. Note that the morphism $\rho$ is unique
  if it exists.
\end{defn}

\begin{lemma}\label{42}
  Let $M$ be a \dieu module over $k$. Then there exists a unique
  biggest minimal \dieu submodule $M_{\rm min}$ contained in
  $M$. Dually there is a unique smallest minimal \dieu module $M^{\rm
  min}$ containing $M$. 
\end{lemma}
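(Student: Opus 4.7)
My plan is to use the isotypic decomposition of the isocrystal $N := M\otimes \Q_p = \bigoplus_\lambda N_\lambda$ to reduce both assertions to the isoclinic case, and then to construct the extremal lattices as a sum (for $M_{\min}$) and an intersection (for $M^{\min}$) of candidate minimal lattices, verifying minimality via the criterion in Lemma~\ref{33}.

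For the reduction I invoke Lemma~\ref{34}(2), which forces every minimal lattice in $N$ to split as a direct sum of its isoclinic pieces. So if $M' \subset M$ is minimal, then $M'_\lambda := M' \cap N_\lambda$ sits inside $M_\lambda := M \cap N_\lambda$ and is itself isoclinic minimal; conversely, the direct sum in $N$ of any collection of minimal lattices $L_\lambda \subset M_\lambda$ lies in $M$ and is minimal. Hence $M_{\min} = \bigoplus_\lambda (M_\lambda)_{\min}$ once the biggest minimal submodule $(M_\lambda)_{\min}$ of each $M_\lambda$ is produced. Dually, any minimal supermodule $M'' \supset M$ in $N$ has the form $M'' = \bigoplus L_\lambda$, and the inclusion $M \subset M''$ is equivalent to $\mathrm{pr}_\lambda(M) \subset L_\lambda$ for each slope projection $\mathrm{pr}_\lambda \colon N \to N_\lambda$; since $M^*_\lambda := \mathrm{pr}_\lambda(M)$ is an $F,V$-stable, finitely generated, torsion-free (hence free) $W$-lattice in $N_\lambda$, it suffices to build the smallest minimal lattice $(M^*_\lambda)^{\min}$ containing $M^*_\lambda$ and take $M^{\min} := \bigoplus_\lambda (M^*_\lambda)^{\min}$.

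In the isoclinic case of slope $\lambda = b/n$ with $n = a+b$ and $\gcd(a,b)=1$, Lemma~\ref{33} says a lattice $L \subset N_\lambda$ is minimal iff $F^n L = p^b L$ and $\Pi_0 L \subset L$, where $\Pi_0$ is the operator on the skeleton from \S\ref{sec:32}. Scaling a standard minimal lattice $\bfM(\beta)\otimes W$ of $N_\lambda$ by suitable powers of $p$ produces minimal lattices contained in $M_\lambda$ and minimal lattices containing $M^*_\lambda$, so both extremal collections are non-empty. I then define $(M_\lambda)_{\min}$ as the $W$-sum of all minimal submodules of $M_\lambda$ (finite free since it sits inside the Noetherian module $M_\lambda$) and $(M^*_\lambda)^{\min}$ as the intersection of all minimal lattices containing $M^*_\lambda$ (finite free as a $W$-submodule of any one of them). $\Pi_0$-stability is preserved by both sums and intersections directly, and so is the equality $F^n L = p^b L$ for sums; for intersections the inclusion $F^n \bigcap L \subset \bigcap F^n L = p^b \bigcap L$ is formal, while the reverse uses injectivity of $F^n$ on $N_\lambda$ so that the preimage in $N_\lambda$ of a given vector in $p^b \bigcap L$ is unique in $N_\lambda$ and therefore lies in every member of the family, hence in the intersection.

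The main obstacle I anticipate is precisely this last verification, that the formal inclusion at the intersection step upgrades to the full equality $F^n (M^*_\lambda)^{\min} = p^b (M^*_\lambda)^{\min}$ demanded by Lemma~\ref{33}; the injectivity argument for $F^n$ on the isocrystal sketched above should resolve it. Uniqueness of $M_{\min}$ and $M^{\min}$ is then automatic from their extremal characterizations.
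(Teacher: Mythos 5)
Your argument is correct, and for the first half (the biggest minimal submodule) it is essentially the paper's proof: reduce to the isoclinic case via the slope decomposition and Lemma~\ref{34}(2), observe that minimal submodules of $M_\lambda$ exist by scaling, and check that the sum of minimal lattices still satisfies conditions (i) and (ii) of Lemma~\ref{33}. Where you genuinely diverge is the second half. The paper disposes of $M^{\rm min}$ with the single word ``dually,'' i.e.\ by passing to the dual Dieudonn\'e module, under which overmodules of $M$ correspond to submodules of $M^t$ and the first half applies verbatim; this is also how the explicit construction in Lemma~\ref{46} proceeds ($\wt Q_\lambda = W_0[\Pi_0]\wt M_\lambda^t$, then dualize back). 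You instead build $M^{\rm min}$ directly as an intersection of minimal overmodules, which forces you to (a) replace $M\cap N_\lambda$ by the projections $\mathrm{pr}_\lambda(M)$ in the reduction --- a point the duality argument hides but which you correctly identify --- and (b) verify that the equality $F^nL=p^bL$ survives arbitrary intersections, which you do via injectivity of $F^n$ on the isocrystal; that step is the one place where intersections behave less formally than sums, and your argument for it is right. The trade-off: the paper's route is shorter and reuses the submodule case wholesale, while yours avoids invoking duality of Dieudonn\'e modules altogether and makes the overmodule construction self-contained, at the cost of the extra injectivity verification.
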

\begin{proof}
  Suppose that $M_1$ is a minimal \dieu module contained in $M$. Then
  $M_{1,\lambda}\subset M_\lambda$ (see Section 3). Therefore we may assume
  that $M$ is isoclinic of slope $\lambda$. If $M_1$ and $M_2$ are
  two minimal \dieu modules contained $M$, then $M_1+M_2$ satisfies
  the conditions (i) and (ii) in Lemma~\ref{33}, and hence it is
  minimal. This completes the proof. \qed
\end{proof}

The minimal \dieu module $M_{\rm min}$ is called the {\it minimal \dieu
submodule} of $M$; the module $M^{\rm min}$ is called the {\it minimal
\dieu overmodule} of $M$. By Lemma~\ref{42}, we have 

\begin{cor}\label{43}
  For any $p$-divisible group $X$ over $k$, the minimal isogeny exists. 
\end{cor}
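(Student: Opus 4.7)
The plan is to deduce the corollary from Lemma~\ref{42} by translating between $p$-divisible groups and \dieu modules via the covariant functor, using the fact that an inclusion of \dieu lattices of equal generic rank corresponds to an isogeny of the associated $p$-divisible groups.

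First I would set $M:=M(X)$ and let $M_{\min}\subset M$ be the biggest minimal \dieu submodule provided by Lemma~\ref{42}. Then I would let $X_0$ be the $p$-divisible group with $M(X_0)=M_{\min}$ and take $\varphi\colon X_0\to X$ to be the morphism induced by the inclusion $M_{\min}\hookrightarrow M$. By construction $X_0$ is minimal (Lemma~\ref{34}(1)), and $\varphi$ will be an isogeny as soon as $M_{\min}$ has the same $\Q_p$-rank as $M$.

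The one point requiring real argument is exactly this full-rank assertion. For this I would invoke the Manin-\dieu classification~\parref{eq:31}: fixing an isomorphism $M\otimes\Q_p\simeq\bigoplus_i(M_{(a_i,b_i)}\otimes\Q_p)^{\oplus r_i}$, the standard lattice $\bfM(\beta)\otimes W$ sitting inside the right-hand side (with $\beta$ the Newton polygon of $M$) is minimal and of full generic rank. Multiplying by a sufficiently high power $p^k$ embeds it into $M$, and since $p^k\cdot(\bfM(\beta)\otimes W)\simeq \bfM(\beta)\otimes W$ as \dieu modules, this gives a full-rank minimal submodule of $M$. By the maximality of $M_{\min}$ it is contained in $M_{\min}$, forcing $M_{\min}\otimes\Q_p = M\otimes\Q_p$.

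For the universal property, let $(X_0',\varphi')$ be any other pair as in Definition~\ref{41}. The \dieu map $M(X_0')\to M$ induced by $\varphi'$ is injective with finite cokernel, and its image is isomorphic to the minimal module $M(X_0')$, hence is itself a minimal \dieu submodule of $M$. By the maximality of $M_{\min}$ this image is contained in $M_{\min}=M(X_0)$, which on the $p$-divisible side gives an isogeny $\rho\colon X_0'\to X_0$ with $\varphi'=\varphi\circ\rho$; uniqueness of $\rho$ is automatic from the faithfulness of the \dieu functor on the isogeny category. The main (mild) obstacle is the full-rank point in the previous paragraph: it does not follow from Lemma~\ref{42} alone and genuinely uses Manin's classification to produce at least one full-rank minimal sublattice of $M$.
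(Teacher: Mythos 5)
Your proof is correct and follows essentially the same route the paper intends: the paper deduces the corollary directly from Lemma~\ref{42} via the (covariant) \dieu functor, identifying the minimal isogeny with the inclusion of the biggest minimal submodule $M_{\rm min}\subset M$, exactly as you do. Your explicit verification that $M_{\rm min}$ has full generic rank (by scaling $\bfM(\beta)\otimes W$ into $M$ using the Manin--\dieu classification) fills in a detail the paper leaves implicit, and it is handled correctly.
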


\begin{remark}\label{44}
For the reader who might question about ground fields, we mention that
the notion of minimal isogenies can be generalized over any
field of \ch $p$ as follows. Let $X$ be a $p$-divisible group over a
field $K$ of \ch $p>0$. We call a $K$-isogeny $\varphi:X_0\to X$ {\it
  minimal} if
\begin{itemize}
\item [(i)] (stronger form) $X_0$ is isomorphic to $\bfH(\beta)\otimes_{\Fp}
  K$, for some Newton polygon $\beta$, and $\varphi$ satisfies the universal
  property as in Definition~\ref{41}, or
\item [(ii)] (weaker form) the base change over its algebraic closure
  $\varphi_{\bar K}:X_{0,\bar K} \to X_{\bar K}$ is the minimal
  isogeny of $X_{\bar K}$.  
\end{itemize}

Suppose that $X$ is an etale $p$-divisible group over $K$. Then
$X_{\bar K}$ is a minimal $p$-divisible group, and the identity map
$id:X\to X$ is a minimal isogeny in the sense of the weaker form. 
However, if $X$ is not isomorphic to the constant etale $p$-divisible
group, then $X$ is not isogenous over $K$ to the constant etale
$p$-divisible group. Therefore, $X$ does not admit a minimal isogeny
in the sense of the stronger form. 
\end{remark}

We need the following finiteness result due to Manin. This
follows immediately from \cite{manin:thesis}, Theorems III.3.4 and
III.3.5. 

\begin{thm}\label{45}
  Let $h\ge 1$ be a positive integer. Then there is an integer $N$
  only depending on $h$ such that for all $p$-divisible groups $X$ 
  of height $h$ over $k$, the degree of the minimal isogeny $\varphi$ 
  of $X$ is less than $p^N$. 
\end{thm}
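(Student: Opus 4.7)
\medbreak\noindent\textbf{Proof plan.} Work through the covariant Dieudonn\'e module equivalence: if $M=M(X)$, then by the universal property of Lemma~\ref{42} the minimal isogeny $\varphi:X_0\to X$ corresponds to the inclusion $M_{\rm min}\hookrightarrow M$, whose degree equals $p^{\ell_W(M/M_{\rm min})}$, where $\ell_W$ denotes length as a $W$-module. The task is therefore to bound $\ell_W(M/M_{\rm min})$ by a function of $h=\rank_W M$ alone, and since $M_{\rm min}$ is the \emph{largest} minimal submodule it suffices to exhibit any minimal $M_0\subset M$ with $\ell_W(M/M_0)$ so bounded.

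\medbreak
Since there are only finitely many Newton polygons of height $h$, I would fix one such polygon $\beta$ and work inside the single $F$-isocrystal $N_\beta:=\bfM(\beta)\otimes_{\Fp}\Q_p$, which contains the standard minimal lattice $L_\beta:=\bfM(\beta)\otimes_{\Fp}W$. Given $M$ with Newton polygon $\beta$, choose an $F$-isocrystal isomorphism $\phi:M\otimes\Q_p\isoto N_\beta$; then $\phi(M)$ and $L_\beta$ are two $W$-lattices in $N_\beta$. The key step is to find integers $C_1,C_2\ge 0$, bounded by some $C=C(h)$, such that after possibly twisting $\phi$ by an element of $\Aut(N_\beta)$ one has
\[
  p^{C_1}L_\beta \;\subset\; \phi(M) \;\subset\; p^{-C_2}L_\beta.
\]
Granted such a sandwich, $p^{C_1}L_\beta$ is itself a minimal Dieudonn\'e module (multiplication by a power of $p$ commutes with $F$ and $V$, so it is still isomorphic to $\bfM(\beta)\otimes W$), hence $\phi(M_{\rm min})\supset p^{C_1}L_\beta$, yielding
\[
  [M:M_{\rm min}] \;\le\; [p^{-C_2}L_\beta:p^{C_1}L_\beta] \;=\; p^{h(C_1+C_2)} \;\le\; p^{hC},
\]
so $N:=hC$ does the job.

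\medbreak
The main obstacle is the uniformity: the existence of such a sandwich constant $C(h)$ valid for \emph{every} $M$ with Newton polygon $\beta$. This is exactly the content of Manin's structure theorems for $F,V$-stable lattices in a fixed isocrystal (\cite{manin:thesis}, Theorems III.3.4 and III.3.5): for each isogeny class they produce a finite list of explicit normal forms whose displacement from $L_\beta$ is controlled by a constant depending only on $\beta$, and taking the maximum over the finitely many Newton polygons of height $h$ gives $C(h)$. Alternatively, one could attempt an isoclinic-first attack, using the skeleton $\wt N:=\{x\in N : F^nx=p^bx\}$ of Section~\ref{sec:32}: take $L\subset \wt M:=M\cap\wt N$ to be the largest $\Pi_0$-stable $W(\F_{p^n})$-sublattice, note that $p\wt M\subset L$ since $\Pi_0^n=p$ (so $[\wt M:L]\le p^h$), and use Lemma~\ref{33} to see that the $W$-submodule of $M$ it generates is minimal. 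But the general case then still demands a uniform bound on how far $M$ can be from both its slope decomposition and from being generated by its skeleton, which is again precisely the input that Manin's theorems supply.
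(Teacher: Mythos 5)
Your argument is correct and is essentially the paper's own: the paper gives no independent proof of this statement, asserting only that it ``follows immediately'' from Manin's Theorems III.3.4 and III.3.5, and your reduction --- translating the degree of the minimal isogeny into the $W$-length $[M:M_{\rm min}]$, fixing one of the finitely many Newton polygons of height $h$, and invoking Manin's uniform bound on the index of a minimal (``special'') lattice inside any $F,V$-stable lattice of a given isocrystal --- is precisely that implicit deduction. (Only your parenthetical side remarks are shaky: Manin's normal forms are not a finite list of isomorphism classes, and in the isoclinic sketch the inclusion $p\wt M\subset L$ can fail for general $(a,b)$; but neither affects the main argument.)
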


\begin{remark}
  Let $E$ be the (unique up to isomorphism) supersingular
  $p$-divisible group of height two over $k$, and let $X_0:=E^g$. Let
  $X$ be a supersingular $p$-divisible group of height $2g$ over
  $k$. Nicole and Vasiu showed that the kernel of the minimal isogeny
  $\varphi: X_0\to X$ is annihilated by $p^{\lceil (g-1)/2\rceil}$;
  see \cite{nicole-vasiu:indiana2007}, Remark~2.6 and
  Corollary~3.2. Moreover, this is optimal, that is, there is a
  supersingular $p$-divisible group $X$ of height $2g$ such that $\ker
  \varphi$ is not annihilated by $p^{\lceil (g-1)/2\rceil-1}$; see
  \cite{nicole-vasiu:indiana2007}, Example~3.3.   
\end{remark}
\subsection{Construction of minimal isogenies}
\label{sec:42}
Let $M$ be a \dieu module over $k$. Put $N:=M\otimes \Q_p$ and let 
\[ N=\bigoplus_\lambda N_\lambda \]
be the isotypic decomposition. Let $\wt N_\lambda$ be the skeleton of
$N_\lambda$ (see (\ref{eq:34})) and put $\wt M_\lambda:=M_\lambda\cap
\wt N_\lambda$. Let $(a,b)$ be the pair associated to $\lambda$ and
put $n=a+b$. Write $W_0$ for the ring $W(\F_{p^n})$ of Witt vectors
over $\F_{p^n}$. 
Let $\wt Q_\lambda:=W_0[\Pi_0] \wt M_\lambda^t$, the
$W_0[\Pi_0]$-submodule of $\wt N^t_\lambda$ generated by $\wt
M_\lambda^t$. Let $\wt P_\lambda:=\wt Q_\lambda^t$ and let
\begin{equation}
  \label{eq:41}
  P(M):=\bigoplus_\lambda \<\wt P_\lambda\>_W. 
\end{equation}
We claim that
\begin{lemma}\label{46}
  The \dieu module $P(M)$ constructed as above is the minimal \dieu
  submodule $M_{\rm min}$ of $M$.
\end{lemma}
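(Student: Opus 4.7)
The plan is to verify the three properties that characterize $M_{\min}$ via Lemma~\ref{42}: (a) $P(M)$ is a Dieudonné submodule of $M$, (b) $P(M)$ is minimal, and (c) every minimal Dieudonné submodule $M'\subset M$ is contained in $P(M)$. Since everything in sight respects the isotypic decomposition, I would work one slope $\lambda=b/(a+b)$ at a time. The central claim driving the whole argument is that the construction produces $\wt P_\lambda$ as the maximal $W_0[\Pi_0]$-stable sublattice of $\wt M_\lambda$ inside $\wt N_\lambda$: this is precisely what the dualization $\wt P_\lambda=\wt Q_\lambda^t$, applied to the inclusion $\wt M_\lambda^t\subset \wt Q_\lambda=W_0[\Pi_0]\wt M_\lambda^t$, says, once one interprets $L\mapsto L^t$ as the usual inclusion-reversing, $\Pi_0$-equivariant lattice duality with respect to a fixed nondegenerate $B(\F_{p^n})$-pairing on $\wt N_\lambda$.

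For (a) and (b), the inclusion $\wt P_\lambda\subset \wt M_\lambda$ is built in, so $\langle\wt P_\lambda\rangle_W\subset M_\lambda\subset M$; the $W_0[\Pi_0]$-stability of $\wt P_\lambda$ is the defining property; and using $F=\Pi_0^{b}$ and $V=\Pi_0^{a}$ on $\wt N_\lambda$, this translates into $F$- and $V$-stability of $\langle\wt P_\lambda\rangle_W$ after extending $W$-linearly. The relation $F^n\wt m=p^b\wt m$ for $\wt m\in\wt N_\lambda$ extends (using that $\sigma$ is a bijection of $W$) to $F^n\langle\wt P_\lambda\rangle_W=p^b\langle\wt P_\lambda\rangle_W$, so the two hypotheses of Lemma~\ref{33} are met and each $\langle\wt P_\lambda\rangle_W$ is minimal isoclinic. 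By Lemma~\ref{34}(2), $P(M)$ itself is therefore minimal.

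For (c), let $M'\subset M$ be any minimal Dieudonné submodule. By Lemma~\ref{34}(2), $M'=\bigoplus_\lambda M'_\lambda$ with each $M'_\lambda$ isoclinic minimal, and Lemma~\ref{33} applied to $M'_\lambda\subset M_\lambda$ says that $\wt M'_\lambda:=M'_\lambda\cap \wt N_\lambda$ is a $\Pi_0$-stable $W_0$-lattice that generates $M'_\lambda$ over $W$ and lies inside $\wt M_\lambda$. Hence $\wt M'_\lambda$ is a $W_0[\Pi_0]$-sublattice of $\wt M_\lambda$, so by the maximality of $\wt P_\lambda$ we get $\wt M'_\lambda\subset \wt P_\lambda$. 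Equivalently, on the dual side, $(\wt M'_\lambda)^t\supset \wt M_\lambda^t$ is $\Pi_0$-stable, hence contains $W_0[\Pi_0]\wt M_\lambda^t=\wt Q_\lambda$, and dualizing back gives the same conclusion. Therefore $M'_\lambda\subset\langle\wt P_\lambda\rangle_W$ and $M'\subset P(M)$.

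The main technical obstacle is the bookkeeping needed to justify that the duality $L\mapsto L^t$ is $\Pi_0$-equivariant in the required sense: since $\Pi_0$ is only $\sigma^{y-x}$-semilinear over $B(k)$, its transpose under an arbitrary pairing need not literally equal $\Pi_0$, and one must fix a nondegenerate $B(\F_{p^n})$-bilinear pairing on $\wt N_\lambda$ for which the ``maximal $W_0[\Pi_0]$-sublattice of $\wt M_\lambda$'' problem transforms into the ``minimal $W_0[\Pi_0]$-overlattice of $\wt M_\lambda^t$'' problem. The existence of such a pairing is guaranteed by the structure of $\wt N_\lambda$ as a finite-dimensional $B(\F_{p^n})$-space on which $\Pi_0^n=p$ is central; once chosen, the rest of the proof is a purely formal lattice-duality argument followed by direct applications of Lemmas~\ref{33} and~\ref{34}(2).
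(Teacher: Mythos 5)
Your proposal is correct and takes essentially the same route as the paper's own (much terser) proof: decompose into isotypic components, observe that lattice duality converts the ``largest minimal submodule of $\wt M_\lambda$'' problem into the ``smallest minimal overmodule of $\wt M_\lambda^t$'' problem, note that the latter is solved by applying $W_0[\Pi_0]$, and invoke Lemma~\ref{33} (and Lemma~\ref{34}(2)) to identify $\Pi_0$-stable lattices in the skeleton with minimal isoclinic \dieu modules. One small correction to your ``technical obstacle'': the relevant duality is the canonical pairing $\wt N_\lambda\times\wt N_\lambda^t\to B(\F_{p^n})$ with the skeleton of the \emph{dual} isocrystal (of slope $1-\lambda$, carrying its own operator with the roles of $a$ and $b$ interchanged), not a self-pairing on $\wt N_\lambda$ --- no $\Pi_0$-equivariant self-pairing can exist unless $\lambda=1/2$, but the formal inclusion-reversing argument goes through verbatim with the two-space pairing.
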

\begin{proof}
  It is clear that $M_{\rm min}=\oplus M_{\rm min,\lambda}$ and
  $M_{\rm min,\lambda}$ is the minimal \dieu submodule of
  $M_\lambda$. Therefore, it suffices to check 
  $\wt P_\lambda=\wt M_{\rm min,\lambda}$. As $\wt M_{\rm
  min,\lambda}\subset \wt M_\lambda$, $\wt M_{\rm min,\lambda}$ is the
  minimal \dieu submodule of $\wt M_\lambda$. Taking dual, it suffices
  to show that $\wt Q_\lambda$ is the minimal \dieu overmodule of $\wt
  M_\lambda^t$. This then follows from Lemma~\ref{33}. \qed  
\end{proof}

Let $\calO$ be an order of a finite-dimensional semi-simple algebra
over $\Q_p$. A $p$-divisible $\calO$-module is a pair $(X,\iota)$,
where $X$ is a $p$-divisible group and $\iota:\calO\to \End(X)$ is a
ring monomorphism. 

\begin{prop}\label{47}
  Let $(X,\iota)$ be a $p$-divisible $\calO$-module over $k$ and let
  $\varphi:X_0\to X$ be the minimal isogeny of $X$ over $k$. Then
  there is a unique ring monomorphism $\iota_0:\calO\to \End(X_0)$
  such that $\varphi$ is $\calO$-linear. 
\end{prop}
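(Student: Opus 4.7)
The plan is to pass to covariant Dieudonné modules and exhibit a natural restriction map $\End_\DM(M) \to \End_\DM(M_{\min})$. Let $M = M(X)$, so that by Lemma~\ref{46} the minimal Dieudonné submodule $M_{\min} \subset M$ is $M(X_0)$ and the inclusion $M_{\min} \hookrightarrow M$ corresponds under the (fully faithful) Dieudonné functor to the minimal isogeny $\varphi: X_0 \to X$. The core claim is that every Dieudonné endomorphism $f \in \End_\DM(M)$ carries $M_{\min}$ into itself; granting this, I define $\iota_0$ as the composition of $\iota: \calO \to \End(X) = \End_\DM(M)$ with the restriction map $\End_\DM(M) \to \End_\DM(M_{\min})$, and the compatibility $\varphi \circ \iota_0(\alpha) = \iota(\alpha) \circ \varphi$ is tautological since $\varphi$ is the inclusion.

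To prove the claim, first note that $f$ preserves the isotypic decomposition of $N := M \otimes \Q_p$: it commutes with $F$, hence with the projectors onto slope pieces, so $f(M_\lambda) \subset M_\lambda$ for every slope $\lambda$. Since $M_{\min} = \bigoplus_\lambda M_{\min,\lambda}$ (Lemma~\ref{34}(2)), it suffices to verify $f(M_{\min,\lambda}) \subset M_{\min,\lambda}$ in each isoclinic component. I apply the criterion of Lemma~\ref{33}: $M_{\min,\lambda}$ is characterized among submodules of $M_\lambda$ by (i) $F^n M_{\min,\lambda} = p^b M_{\min,\lambda}$ and (ii) $\Pi_0(M_{\min,\lambda}) \subset M_{\min,\lambda}$, where $\Pi_0 = F^y V^x$. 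Because $f$ commutes with both $F$ and $V$, it commutes with $F^n$, with multiplication by $p^b$, and with $\Pi_0$; so
\[
F^n f(M_{\min,\lambda}) = f(F^n M_{\min,\lambda}) = f(p^b M_{\min,\lambda}) = p^b f(M_{\min,\lambda}),
\]
and $\Pi_0(f(M_{\min,\lambda})) = f(\Pi_0(M_{\min,\lambda})) \subset f(M_{\min,\lambda})$. Thus $f(M_{\min,\lambda})$ is itself a minimal Dieudonné submodule of $M_\lambda$, and by the maximality statement of Lemma~\ref{42}, $f(M_{\min,\lambda}) \subset M_{\min,\lambda}$, as required.

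It remains to check injectivity and uniqueness. For uniqueness, if $\iota_0, \iota_0'$ both witness the conclusion, then $\varphi \circ (\iota_0(\alpha) - \iota_0'(\alpha)) = 0$; since $\varphi$, viewed as the inclusion $M_{\min} \hookrightarrow M$, is $W$-module injective, this forces $\iota_0 = \iota_0'$. For injectivity of $\iota_0$, note that the restriction map $\End_\DM(M) \to \End_\DM(M_{\min})$ is itself injective, because $M_{\min} \otimes \Q_p = M \otimes \Q_p$, so any $f$ vanishing on $M_{\min}$ vanishes on $N$ and hence on $M$; then $\iota_0 = \mathrm{res} \circ \iota$ is injective as a composition of two injections. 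The main substantive step is the stability of the conditions of Lemma~\ref{33} under $f$; everything else (passing to Dieudonné modules, splitting by isotypic components, uniqueness, and injectivity) is formal.
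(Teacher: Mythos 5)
Your proof is correct and follows essentially the same route as the paper: pass to covariant Dieudonn\'e modules and show that every $\phi\in\End_{\DM}(M)$ preserves $M_{\rm min}$, which makes $\iota_0$ the restriction of $\iota$. The only (harmless) difference is in how that containment is justified — the paper appeals directly to the skeleton construction of Lemma~\ref{46}, while you verify that $f(M_{{\rm min},\lambda})$ satisfies the criterion of Lemma~\ref{33} and then invoke the maximality statement of Lemma~\ref{42}; your version spells out details the paper leaves implicit.
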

\begin{proof}
  Let $M$ be the \dieu module of $X$ and let 
  $\phi\in \End_{\DM}(M)$ be an endomorphism. It suffices to show that
  $\phi(M_{\rm min})\subset M_{\rm min}$. It is clear that $\phi(\wt
  N_\lambda)\subset \wt N_\lambda$. It follows from the construction
  of the minimal \dieu submodule that $\phi(M_{\rm min})\subset M_{\rm
  min}$. This proves the proposition. \qed
\end{proof}

\subsection{Proof of Theorem~\ref{25}}
\label{sec:43}

Let $M$ be the \dieu module of $X$. Let $M^{\rm min}$ be the minimal
\dieu overmodule of $M$. By Theorem~\ref{45}, there is a positive
integer $N_1$ only depending on the rank of $M$ such that the length
${\rm length}(M^{\rm min}/M)$ as a $W$-module is less than $N_1$. Let
$N_2$ be a positive integer 
so that $p^{N_2}M^{\rm min}\subset M \subset M^{\rm min}$. Let 
$\phi\in \End_{\DM}(M)$ be an element.  
By Proposition~\ref{47}, one has
$\phi\in  \End_{\DM}(M^{\rm min})$. Therefore, we have showed
\begin{equation}
  \label{eq:42}
  \End_{\DM}(M)=\{\phi\in  \End_{\DM}(M^{\rm min})\, ; \,
\phi(M)\subset M  \}.
\end{equation}
We claim that $p^{N_2}\End_{\DM}(M^{\rm min})\subset
\End_{\DM}(M)$. Indeed, if $\phi\in \End_{\DM}(M^{\rm min})$, then 
\[ p^{N_2}\phi(M)\subset p^{N_2} M^{\rm min}\subset M. \]
Therefore, there is an positive integer $N$ only depending on the rank
of $M$ such that $v_p(\ci(\End_{\DM}(M)))< N$. This completes the proof
of Theorem~\ref{25}, and hence completes the proof of Theorem~\ref{11}.

\subsection{Proof of Theorem~\ref{12}}
\label{sec:44}
By a theorem of Tate \cite{tate:eav}, we have 
\[ [\End^0(A)\otimes \Q_p:\Q_p]\le 4 g^2. \]
Since there are finitely many finite extensions of $\Q_p$
of bounded degree, and finitely many Brauer invariants with bounded 
denominator, there are finitely many semi-simple algebras
$\End^0(A)\otimes \Q_p$, up to isomorphism, 
of abelian varieties $A$ of dimension $g$. It follows from
Theorem~\ref{11} that in each isogeny class there are finitely many
endomorphism rings $\End(A)\otimes \Z_p$, up to
isomorphism. Therefore, there are finitely many isomorphism classes of 
the endomorphism rings $\End(A)\otimes \Z_p$ for all
$g$-dimensional abelian varieties $A$ of a field of \ch $p>0$. This
completes the proof. 











\section{Examples}
\label{sec:05}

\subsection{}
\label{sec:51}
We start with a trivial example. Suppose the abelian variety $A_0$
over a field $k$ has the property $\End_k(A_0)=\Z$. Then for any
member $A\in [A_0]_k$, the endomorphism ring $\End_k(A)$ is always
a maximal order. Therefore, there is an isogeny class $[A_0]_k$ such
that the endomorphism rings $\End_k(A)$ are maximal for all $A\in
[A_0]_k$.

\subsection{}
\label{sec:52}
Let $p$ be any prime number. 
Let $K$ be an imaginary quadratic field such that $p$ splits in $K$. 
Let $O_K$ be the ring of integers. For any positive integer $m$, let 
$E^{(m)}$ be the elliptic curve over $\C$ so that $E^{(m)}(\C)=\C/\Z+m
O_K$. It is easy to see that $\End_{\C} (E^{(m)})=\Z+mO_K$, and hence
$\ci(\End(E^{(m)}))=m$. By the 
theory of complex multiplication 
\cite{shimura:aaf1971}, each elliptic curve $E^{(m)}$ is defined 
over $\bar \Q$ and has good reduction everywhere over some number
field. Let $E^{(m)}_p$ be the reduction of $E^{(m)}$ over $\Fpbar$; this
is well-defined. Since $O_K\otimes \Zp$ has non-trivial idempotent and
hence it is not contained in the division quaternion $\Q_p$-algebra, 
$E^{(m)}_p$ is ordinary. Therefore, we have 
$\End(E^{(m)}_p)\otimes \Zp=\Z_p\times \Z_p$ is maximal (see
\cite{deuring}, cf.  
\cite{lang:ef}, Chapter 13, Theorem 5, p.~175). 
Clearly we have $E^{(m)}\in [E^{(1)}]_{\Qbar}$ and 
$E^{(m)}_p\in [E^{(1)}_p]_{\Fpbar}$. Using \cite{oort:endo}, Lemma
2.1, we show that for $(m,p)=1$,
$\ci(\End(E^{(m)}_p))=\ci(\End(E^{(m)})=m$. 
These give examples over a field \ch zero or $p>0$ in Proposition~\ref{14}.  

Note that not all of elliptic curves $E^{(m)}$ (resp. $E^{(m)}_p$)
above are defined over a fixed  number field (resp. a fixed finite
field). Therefore, we did not exhibit an example for
Proposition~\ref{14} when the ground $k$ is of finite type over 
its prime field. 
It is natural to ask: if
$k$ is finitely generated over its prime field, are $\ell$-co-indices 
$v_\ell(\ci(\End(A)))$, for all $A\in [A_0]_k$, bounded or unbounded?

\subsection{Description of $\calS_2$}
\label{sec:53}
Let $n\ge 3$ be a prime-to-$p$ positive integer. Let
$\calA_{2,1,n}\otimes \Fpbar$ denote the Siegel 3-fold over $\Fpbar$
with level 
$n$-structure, and let $\calS_2$ denote the supersingular locus. Let
$\Lambda^*$ be the set of isomorphism classes of superspecial
polarized abelian surfaces $(A,\lambda,\eta)$ over $\Fpbar$ 
with polarization degree $\deg \lambda=p^2$ and a level 
$n$-structure $\eta$. 
For each member $\xi=(A_1,\lambda_1,\eta_1)\in \Lambda^*$, let 
$S_\xi$ be the space 
that parametrizes degree $p$ isogenies
$\varphi:(A_1,\lambda_1,\eta_1)\to (A,\lambda,\eta)$ preserving
polarizations and level structures. The variety $S_\xi$ is isomorphic
to $\bfP^1$ over $\Fpbar$; we impose the $\F_{p^2}$-structure on
$\bfP^1$ defined by $F^2=-p$ on $M_1$, where $M_1$ is the \dieu module
of $A_1$ and $F$ is the Frobenius map on $M_1$. For this structure,
the superspecial points are exactly the $\F_{p^2}$-valued points on $V$. 
It is known (see Katsura-Oort \cite{katsura-oort:surface}) 
that the projection ${\rm pr}: S_\xi \to \calS_2$ induces an
isomorphism ${\rm pr}:S_\xi\simeq V_\xi\subset \calS_2$ 
onto one irreducible component. Conversely, any
irreducible component $V$ is of the form $V_\xi$ for exact one
member $\xi \in \Lambda^*$. Two irreducible components $V_1$ and
$V_2$, if they intersect, intersect transversally at some superspecial
points.  

\subsection{``Arithmetic'' refinement of $\calS_2$}
\label{sec:54}
We describe the arithmetic refinement on one irreducible component
$V=\bfP^1$ of $\calS_2$. For any point $x$, we write $c_p(x)$ for
$v_p(\ci(\End(A_x)))$, where $A_x$ is the underlying abelian surface
of the object $(A_x,\lambda_x,\eta_x)$ corresponding to the point
$x$. Let $D$ be the division quaternion
algebra over $\Q_p$ and let $O_D$ be the maximal order of $D$. 
The endomorphism ring of a superspecial \dieu module is (isomorphic
to) $M_2(O_D)$. For
non-superspecial supersingular \dieu modules, one can compute their
endomorphism rings using (\ref{eq:42}). 
Let $\pi:M_2(O_D)\to M_2(\F_{p^2})$ be the
natural projection. We compute these endomorphism rings and get (see
\cite{yu-yu:mass_surface}, Proposition 3.2):
  
\begin{prop}\label{51}
  Let $x$ be a point in $V=\bfP^1$ and let $M_x$ be the associated
  \dieu module. 

  {\rm (1)} If $x\in \bfP^1(\F_{p^2})$, then $\End_{\DM}(M_x)=M_2(O_D)$.

  {\rm (2)} If $x\in \bfP^1(\F_{p^4}) -\bfP^1(\F_{p^2})$, then 
\begin{equation*}
  \End_{\DM}(M_x)\simeq \{\phi\in M_2(O_D)\, ; \pi(\phi)\in B'_0\, \},
\end{equation*}
where $B_0'\subset M_2(\F_{p^2})$ is a subalgebra
isomorphic to $\F_{p^2}(x)$.

  {\rm (3)} If $x\in \bfP^1(k)-\bfP^1(\F_{p^4})$, then 
\begin{equation*}
  \End_{\DM}(M_x)\simeq \left \{\phi\in M_2(O_D)\, ; 
  \pi(\phi)=  \begin{pmatrix}
   a & 0 \\ 0 & a
  \end{pmatrix},\, a\in \F_{p^2} \, \right \}.
\end{equation*}
\end{prop}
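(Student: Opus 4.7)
The approach is to apply equation (4.2) from the proof of Theorem 2.5, namely $\mathrm{End}_{\mathcal{DM}}(M) = \{\phi \in \mathrm{End}_{\mathcal{DM}}(M^{\mathrm{min}}) : \phi(M) \subset M\}$, taking as $M^{\mathrm{min}}$ the fixed superspecial Dieudonn\'e module $M_1$, and then translating the condition $\phi(M_x) \subset M_x$ into a linear-algebra condition on a 2-dimensional quotient.

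First I would identify $M_x$ explicitly inside $M_1$. A point $x \in V = S_\xi$ parametrizes a degree-$p$ isogeny $\varphi_x: (A_1,\lambda_1) \to (A_x,\lambda_x)$, and on covariant Dieudonn\'e modules this realises $M_x$ as a sub-$\mathcal{DM}$-module of $M_1$ with $M_1/M_x$ of $W$-length one. Because $M_1$ is superspecial ($F^2 = V^2 = -p$), the submodule $M_1' := FM_1 = VM_1$ is contained in every such $M_x$, so $M_1' \subset M_x \subset M_1$, and $V = \mathbb{P}(M_1/M_1')$ via $x \leftrightarrow \ell_x$, where $M_1/M_1'$ is a 2-dimensional $k$-vector space. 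The $\mathbb{F}_{p^2}$-structure imposed on $V$ in Subsection 5.3 is exactly the one for which the $\mathbb{F}_{p^2}$-rational lines are those giving superspecial $M_x$, i.e.\ the lines stable under the $\sigma^2$-linear operator on $M_1/M_1'$ induced by $F^2 = -p$.

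For case (1), $x \in \mathbb{P}^1(\mathbb{F}_{p^2})$ gives a superspecial $M_x \cong \mathbf{M}_{(1,1)}^{\oplus 2} \otimes W$, so $M_x$ is minimal and its endomorphism ring is $M_2(O_D)$ by Lemmas 3.1(2) and 3.4(1). For cases (2) and (3), $M_x$ is not minimal, and the chain $M_1' \subset M_x \subsetneq M_1$ together with the length-one quotient $M_1/M_x$ forces $M_x^{\mathrm{min}} = M_1$. Equation (4.2) then gives $\mathrm{End}_{\mathcal{DM}}(M_x) = \{\phi \in M_2(O_D) : \phi(M_x) \subset M_x\}$; since every $\phi$ preserves both $M_1$ and $M_1' = FM_1$, the condition reduces to $\bar\phi(\ell_x) \subset \ell_x$ on $M_1/M_1'$. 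Next I would verify that the induced action of $M_2(O_D)$ on $M_1/M_1'$ factors through the reduction $\pi: M_2(O_D) \to M_2(\mathbb{F}_{p^2})$ and identifies $M_1/M_1'$ with the standard module $(O_D/\Pi O_D)^2 \otimes_{\mathbb{F}_{p^2}} k \cong k^2$; this uses that $\Pi \in O_D$ acts as $F$ on $\mathbf{M}_{(1,1)} \otimes W$, and that $M_1$ is a free right $O_D$-module of rank $2$.

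The remaining step is a direct linear-algebra classification of the subalgebras $\{\psi \in M_2(\mathbb{F}_{p^2}) : \psi\ell_x \subset \ell_x\}$, with $\ell_x = k\cdot(1,t)$: writing $\psi = \begin{pmatrix} a & b \\ c & d \end{pmatrix}$, the condition $\psi\ell_x \subset \ell_x$ becomes $b t^2 + (a-d) t - c = 0$. When $t \in \mathbb{F}_{p^4} \setminus \mathbb{F}_{p^2}$, this cuts out a quadratic $\mathbb{F}_{p^2}$-subalgebra generated by the companion matrix of the minimal polynomial of $t$, isomorphic to $\mathbb{F}_{p^2}(x) = \mathbb{F}_{p^4}$. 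When $t \notin \mathbb{F}_{p^4}$, a quadratic polynomial with $\mathbb{F}_{p^2}$-coefficients having $t$ as a root must be identically zero, forcing $b = 0$, $a = d$, $c = 0$, and leaving only the scalar matrices $\mathrm{diag}(a,a)$, $a \in \mathbb{F}_{p^2}$. The main obstacle is the compatibility work in the previous paragraph: setting up the identification of $M_1$ as a right $O_D$-module so that (a) the $\mathbb{F}_{p^2}$-structure on $V = \mathbb{P}(M_1/M_1')$ agrees with the one defined by $F^2 = -p$ in Subsection 5.3, and (b) the reduction $\pi$ acts standardly on $M_1/M_1' \cong k^2$. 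Everything else is formal linear algebra modulo $\Pi$.
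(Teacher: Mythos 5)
Your proposal is correct and follows essentially the route the paper intends: the paper gives no proof of Proposition~\ref{51} beyond the remark that one computes via equation~(\ref{eq:42}) (citing [Yu--Yu, Prop.~3.2] for details), and your argument --- extend endomorphisms of $M_x$ to the superspecial module $M_1$ using (\ref{eq:42}), observe that the induced action on the $2$-dimensional quotient factors through $\pi:M_2(O_D)\to M_2(\F_{p^2})$, and classify the stabilizer of the line $\ell_x=k\cdot(1,t)$ according to whether $t$ lies in $\F_{p^2}$, in $\F_{p^4}\setminus\F_{p^2}$, or outside $\F_{p^4}$, treating the superspecial case (1) separately since there $M_x$ is itself minimal --- is exactly that computation. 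One cosmetic caveat: with the paper's covariant convention the degree-$p$ isogeny $A_1\to A_x$ realizes $M_x$ as an overmodule of $M_1$ (so one works with the chain $M_1\subset M_x\subset V^{-1}M_1$ and lines in $V^{-1}M_1/M_1$) rather than as a submodule, but this changes nothing in the ensuing linear algebra.
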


We remark that Proposition~\ref{51} was also known to Ibukiyama. 

For each integer $m\ge 0$, let
\[ V_m:=\{x\in V; c_p(x)\le m\}. \]
The collection $\{V_m\}_{m\ge 0}$ forms an increasing sequence of 
closed subsets of $V=\bfP^1$. We apply Proposition~\ref{51} and get
\[ V_0=\dots=V_3\subset V_4=V_5\subset V_6=V, \]
and 
\[ V_0=\bfP^1(\F_{p^2}), \quad  V_4=\bfP^1(\F_{p^4}). \]
This provides more information on $\calS_2$ not just superspecial and
non-superspecial points.




\subsection{Semi-simplicity of Tate modules}\label{sec:55}
Let $A$ be an abelian variety over a field $k$. Let $k^{sep}$ a
separable closure of $k$ and let $G:=\Gal(k^{sep}/k)$ be the Galois
group. To each prime  $\ell\neq \char (k)$, one associates
the $\ell$-adic Galois representation 
\[ \rho_\ell:G\to \Aut (T_\ell(A)), \] 
where $T_\ell(A)$ is the Tate module of $A$. 
According to Faltings \cite{faltings:end} and Zarhin
\cite{zarhin:end}, under the condition that 
the ground field $k$ is of finite type over its
prime field, the Tate  module $V_\ell:=T_\ell(A)\otimes \Q_\ell$ 
is semi-simple as a $\Q_\ell[G]$-module. 
We show that this condition is necessary.

Let $A_0$ be an abelian variety over a field $k_0$ which is finitely
generated over its prime field. We write $G_0:=\Gal(k_0^{\rm
  sep}/k_0)$ and $G^{\rm alg}_0$ for the algebraic envelope
of $G_{\ell}:=\rho_\ell(G_0)$; that is, $G^{\rm alg}_0$ is the Zariski
closure of $G_\ell$ in $\Aut(V_\ell(A_0))$ that is regarded as
algebraic groups over $\Q_\ell$. 
Assume that the algebraic group $G^{\rm alg}_0$
is not a torus; for example let $A_0$ be an elliptic curve without CM. 
We shall choose an intermediate
subfield $k_0\subset k\subset k_0^{\rm sep}$ so that the Tate module
$V_\ell(A)$ associated to the base change $A:=A_0\otimes k$ is not
semi-simple as a $G:=\Gal(k^{\rm sep}/k)$-module. 
We can choose a closed subgroup $H\subset G_{\ell}$
such that $V_\ell(A_0)$ 
is not a semi-simple $\Q_\ell[H]$-module. To see this, 
by Bogomolov's theorem (see \cite{bogomolov:alg}),  $G_{\ell}$ is an
open compact subgroup of $G^{\rm alg}_0(\Q_\ell)$. 
We choose a Borel subgroup of $B$ of $G^{\rm
  alg}_0$ and let $H$ be the the intersection $G_\ell \cap
B(\Q_\ell)$. Then $H$ is a closed non-commutative solvable group and
$V_\ell(A_0)$ is not a semi-simple $\Q_\ell[H]$-module. 
Using the Galois theory, let $k$ correspond the closed subgroup 
$\rho_\ell^{-1}(H)$. Then the abelian variety $A:=A_0\otimes k$ gives
a desired example. 

In this example, the endomorphism algebra
$\End^0_k(A[\ell^\infty])=\End_{\Q_\ell [H]}(V_\ell(A))$ is not
semi-simple.

\subsection{Semi-simplicity of endomorphism algebras of $p$-divisible
  groups}
\label{sec:56}
Let $k$ be a field of \ch $p>0$. Consider the following two questions:\\

(1) Is the category of $p$-divisible groups of finite height 
    up to isogeny over k semi-simple?\\

(2) Is the endomorphism algebra $\End_k(X)\otimes \Qp$ of a
    $p$-divisible group $X$ over $k$ semi-simple? \\

We show that the answers to both questions are negative. Indeed etale
$p$-divisible groups already provide such examples. Note that the
category of etale $p$-divisible groups of finite height up to isogeny
is equivalent to the category of continuous linear representations of
$\Gal(k^{\rm sep}/k)$ on finite-dimensional $\Qp$-vector spaces. 
For instance, one can have a 2-dimensional Galois representation whose
image is the set of all upper-triangular unipotent matrices in 
$\GL_2(\Zp)$. It gives a counter-example for both questions. \\ \\

\npr {\bf Erratum. } 
The second assertion of Lemma~\ref{22} is wrong.   
As this is only used to reduce to \ac fields, the statements of
Theorems~\ref{11} and \ref{12} are valid at least when the ground field
is \ac of \ch $p>0$. 

To see this false, for example, take a supersingular elliptic curve
$E$ over $\F_7$ whose endomorphism ring is $\Z[\sqrt{7}]$, which is not a
maximal order. However, the endomorphism ring of
$E\otimes \F_{p^2}$ is a maximal order of the quaternion algebra over
$\Q$ ramified at $\{7,\infty\}$.

\end{document}